\newcommand{\N}{\mathbb{N}}
\newcommand{\R}{{\mathbb{R}}}
\newcommand{\C}{{\mathbb{C}}}
\newcommand{\dd}{{{\rm d}}}
\newcommand{\ii}{{\rm i}}
\newcommand{\ie}{{\emph{i.e.}}}
\newcommand{\eg}{{\emph{e.g.}}}
\newcommand{\la}{\lambda}
\newcommand{\eps}{\varepsilon}
\newcommand{\Dom}{{\operatorname{Dom}}}
\renewcommand{\Re}{\operatorname{Re}}
\renewcommand{\Im}{\operatorname{Im}}
\newcommand{\dist}{\operatorname{dist}}
\newcommand{\sgn}{\operatorname{sgn}}
\newcommand{\supp}{\operatorname{supp}}
\newcommand{\loc}{\mathrm{loc}}
\newcommand{\BigO}{\mathcal{O}}
\newcommand{\opH}{H}
\newcommand{\opA}{A}
\newcommand{\opI}{I}
\newcommand{\opS}{S}
\newcommand{\opT}{T}
\newcommand{\opU}{U}
\newcommand{\opZ}{Z}
\newcommand{\Rplus}{\R_+}
\newcommand{\intR}{\int_{\R}}
\newcommand{\Lt}{{L^2}}
\newcommand{\LilocR}{{L^{\infty}_{\rm loc}(\R})}
\newcommand{\CcR}{{C_c^{\infty}(\R)}}
\newcommand{\WotR}{{W^{1,2}(\R)}}
\newcommand{\WttR}{{W^{2,2}(\R)}}
\newcommand{\Dt}{-\partial_x^2}
\newcommand{\Dtp}{\partial_x^2}
\newcommand{\Nt}{-\partial_x}
\newcommand{\Ntp}{\partial_x}
\newcommand{\Ntime}{\partial_t}
\newcommand{\ls}{\lesssim 	}
\newcommand{\gs}{\gtrsim}
\theoremstyle{plain}
\newtheorem{theorem}{Theorem}[section]
\newtheorem{lemma}[theorem]{Lemma}
\newtheorem{proposition}[theorem]{Proposition}
\theoremstyle{definition}
\newtheorem{example}[theorem]{Example}
\newtheorem{remark}[theorem]{Remark}
\newtheorem{asm-sec}[theorem]{Assumption}
\newcommand\cH{\mathcal H}
\newcommand\cS{\mathcal S}
\newcommand\sF{\mathscr F}
\newcommand\sK{\mathscr K}
\newcommand\sL{\mathscr L}
\numberwithin{equation}{section}
\numberwithin{figure}{section}
\begin{document}
\title[Generalised Airy operators]{Generalised Airy operators}

\author{Antonio Arnal}

\address[Antonio Arnal]{Mathematical Sciences Research Centre, Queen's University Belfast, University Road, Belfast BT7 1NN, UK}

\email{aarnalperez01@qub.ac.uk}

\author{Petr Siegl}

\address[Petr Siegl]{Institute of Applied Mathematics, Graz University of Technology, Steyrergasse 30, 8010 Graz, Austria}

\email{siegl@tugraz.at}


\subjclass[2010]{34L05, 34L40, 47A10}

\keywords{complex Airy operator, Schr\"odinger operator, resolvent operator, resolvent bounds, non-self-adjoint operator, spectral theory}

\date{\today}

\begin{abstract}
	We study the behaviour of the norm of the resolvent for non-self-adjoint operators of the form $\opA := \Nt + W(x)$, with $W(x) \ge 0$, defined in $\Lt(\R)$. We provide a sharp estimate for the norm of its resolvent operator, $\| (\opA - \la)^{-1} \|$, as the spectral parameter diverges $(\la \to +\infty)$. Furthermore, we describe the $C_0$-semigroup generated by $-\opA$ and determine its norm. Finally, we discuss the applications of the results to the asymptotic description of pseudospectra of Schr\"odinger and damped wave operators and also the optimality of abstract resolvent bounds based on Carleman-type estimates.
\end{abstract}

\maketitle

\section{Introduction}
\label{sec:intro}
The complex Airy operator in $\Lt(\R)$
\begin{equation}\label{eq:Airy.intro}
	\opA_0 = \Dt + i x, \quad \Dom(\opA_0) = \WttR \cap \Dom(x),
\end{equation}
and the corresponding realisations in $\Lt(\Rplus)$ and in $\Lt((a, b))$ appear in various contexts, \eg~in the study of resonances \cite{Herbst-1979-64}, superconductivity~\cite{Almog-2008-40,Almog-2010-300,Rubinstein-2010-195}, the Bloch-Torrey equation~\cite{grebenkov2017complex,Grebenkov-2018-50}, MRI \cite{Grebenkov-2018-269}, hydrodynamics \cite{Shkalikov-2004-124}, control theory~\cite{Beauchard-2015-21}, spectral instability \cite{Henry-2012-350}, spectral approximation in domain truncations \cite{Semoradova-toappear}, asymptotic description of pseudospectra of Schr\"odinger operators \cite{ArSi-resolvent-2022}, completeness properties of the eigensystem and similarity to a normal operator \cite{Savchuk-2017-51, Tumanov-2017-96}, momenta with complex magnetic fields, quasi-self-adjoint Quantum Mechanics and Hardy-type inequalities in $\R^2$ \cite{Krejcirik-2019-51} - see also \cite{Helffer-NYUS-2016}, \cite[Sec.~14.3, Sec.~14.5]{Helffer-2013-book}.

The crucial aspect of \eqref{eq:Airy.intro} is that it represents a tractable non-self-adjoint differential operator for which more explicit methods can be applied. In addition, more complicated operators in certain asymptotic regimes (\eg~the semi-classical one or those with large spectral parameter) can often be reduced to the analysis of the Airy operator. At the same time, the properties of $A_0$ already exhibit many striking and ``highly non-self-adjoint'' features. In particular:
\begin{enumerate}[\upshape (i), wide]
	\item $A_0$ is m-accretive, has compact resolvent \textit{and} $\sigma(\opA_0) = \emptyset$;
	\item the resolvent norm only depends on $\Re z$ and grows super-exponentially at $+\infty$
	\begin{equation}
		\label{eq:complex.airy.res.norm}
		\| (\opA_0 - \Re z)^{-1} \| = \sqrt{\tfrac{\pi}{2}} (\Re z)^{-\frac14} \exp\left(\tfrac43 (\Re z)^{\frac32} \right) (1 + o(1)), \quad \Re z \to +\infty;
	\end{equation}
	\item the semigroup $S_t$ generated by $-A_0$ can be determined explicitly and decays super-exponentially
	\begin{equation*}
		\|S_t\| = \exp( - \tfrac{t^3}{12}), \quad t >0.
	\end{equation*}
\end{enumerate}
For details, see \eg~\cite[Sec.~14.3, Sec.~14.5]{Helffer-2013-book}. The asymptotic equality \eqref{eq:complex.airy.res.norm} was proved 
in \cite[Cor.~1.4]{BordeauxMontrieux-2013} using pseudo-differential operator techniques within a semi-classical framework pioneered in \cite{Dencker-2004-57}. It improved an earlier estimate, presented in \cite{martinet-these} (see also \cite[Prop.~2.10]{BordeauxMontrieux-2013} or \cite[Prop.~14.11]{Helffer-2013-book}), which was derived via an analysis of the corresponding semigroup (see \cite[Sec.~1]{BordeauxMontrieux-2013}).

One of the reasons why the precise results summarized above are available is that, by transforming $\opA_0$ to Fourier space (via $\sF \opA_0 \sF^{-1}$, where $\sF$ denotes the Fourier transform), one obtains the first order operator
\begin{equation*}
	\opA_2 = \Nt + x^2, \quad \Dom(\opA_2) = \WotR \cap \Dom(x^2)
\end{equation*}
(where we have retained $x$ to denote the independent variable). 
In this paper, our main objective is to study generalised Airy operators in $\Lt(\R)$
\begin{equation}\label{eq:gen.A.intro}
\opA = \Nt + W(x),
\end{equation}
where $W$ is a non-negative, even, eventually increasing function which is unbounded at infinity, see Assumption~\ref{asm:W.airy} for details. Operators \eqref{eq:gen.A.intro} share many properties of the complex Airy operator \eqref{eq:Airy.intro}. In particular, they are m-accretive, have compact resolvent and empty spectrum, see~Proposition~\ref{prop:airy.basic.properties} proved in \cite[App.~A]{ArSi-resolvent-2022}. Our goal here is to analyse precisely the resolvent and semigroup norms.  

Our motivation originates in the analysis of pseudospectra of Schr\"odinger operators with complex potentials in \cite{ArSi-resolvent-2022} and of damped wave operators with unbounded damping in \cite{arnal2022resolvent-dwe}, where the resolvent norm of $A$ enters the asymptotic formulas for the level-curves of the pseudospectra, see more details below and in Sub-section~\ref{ssec:level.curves}. Nonetheless operators \eqref{eq:gen.A.intro} furnish particularly simple instances, tractable by relatively elementary methods, that already exhibit the sort of pathologies found in more complex non-self-adjoint operators. Moreover the flexibility in the rate of growth of $W$ at infinity permits a wide range of resolvent and semigroup norm behaviours (see examples below and Examples~\ref{ex:W.resolvent}, \ref{ex:W.sg}). These can in turn be used to address the optimality of abstract resolvent upper bounds based on Carleman-type estimates (see details below and in Sub-section~\ref{ssec:bandtlow.optimal}).

Our main finding, Theorem~\ref{thm:airy.norm}, states that, if $W$ satisfies Assumption~\ref{asm:W.airy}, then
\begin{equation}\label{eq:A.res.intro}
	\| (\opA - \la)^{-1} \| = \sqrt{\pi} (W'(x_{\la}))^{-\frac12} \exp \left(2 f_{\la}(x_{\la})\right)  (1 + o(1)), \quad \la \to +\infty,
\end{equation}
with $x_{\la} > 0$ the (positive) turning point of $W$ defined by $W(x_{\la}) = \la$ and $f_{\la}$ the primitive of $W$ defined by 
\begin{equation*}
	f_{\la}(x) := \int_{0}^{x} (\la - W(t)) \dd t, \quad x \in \R; 
\end{equation*}
note that, as for $A_0$, the resolvent norm is independent of $\Im \la$. 

This result is in agreement with \eqref{eq:complex.airy.res.norm}, where $W(x) = x^2$, and it yields a variety of rates for other examples of $W$, namely
as $\la \to +\infty$,
\begin{align*}
	W(x) &= \log \langle x \rangle, & \| (\opA - \la)^{-1} \| &= \sqrt{\pi} \exp\left(2 \sqrt{e^{2 \la} - 1} + \la/2 - \pi\right) (1 + o(1)),\\
	W(x) &= |x|^p, & \| (\opA - \la)^{-1} \| &= \sqrt{\tfrac{\pi}{p}} \la^{\frac{1 - p}{2 p}} \exp\left(\tfrac{2 p}{p + 1} \la^{\frac{1 + p}{p}}\right) (1 + o(1)), \ p>0, \\
	W(x) &= e^{|x|}, & \| (\opA - \la)^{-1} \| &= \sqrt{\pi} \la^{-\frac12} \exp\left(2 \la \log(\la) (1 + o(1))\right) (1 + o(1));
\end{align*}
see Example~\ref{ex:W.resolvent}. It is apparent that the \emph{more slowly} $W$ grows the larger the resolvent will be: in the above examples, the largest resolvent corresponds to $W(x) = \log \langle x \rangle$. A similar behaviour can be observed for one-dimensional Schr\"odinger operators with complex potentials (see \cite[Sec.~7]{ArSi-resolvent-2022}). 

In addition, in Theorem~\ref{thm:airy.sg.norm}, we find the $C_0$-semigroup $\opS_t$ generated by $-\opA$ and  calculate its norm. The latter decays in direct proportion to the growth of $W$, \eg~
\begin{align*}
	W(x) &= \log \langle x \rangle, & \| \opS_t \| &= \langle \tfrac t2 \rangle^{-t} \exp\left(t - \pi\right) (1 + o(1)), & &t \to +\infty,\\
	W(x) &= |x|^p, & \| \opS_t \| &= \exp \left(-\tfrac{2^{-p}}{p + 1} t^{p + 1}\right), & &t \ge 0, \ p>0,\\
	W(x) &= e^{|x|}, & \| \opS_t \| &= \exp\left(-2 e^{\frac{t}{2}} (1 + o(1))\right), & &t \to +\infty,
\end{align*}
\ie~rapidly growing functions such as $W(x) = e^{|x|}$ show faster semigroup decay; see Example~\ref{ex:W.sg} for more details.

As indicated above, the resolvent norm of $A_0$ and that of $A$ appear in the asymptotic formulas for the shape of the pseudospectra of second order operators. In \cite[Prop.~5.1]{ArSi-resolvent-2022}, it was shown that, for a wide class of Schr\"odinger operators with complex potential unbounded at infinity, $\opH = \Dt + i V$, in $\Lt(\Rplus)$ (see \cite[Asm.~3.1]{ArSi-resolvent-2022}), the norm of the resolvent of $\opH$ along curves $\la_b = a(b) + i b$ inside the numerical range and adjacent to the imaginary axis can be estimated precisely as a function of the norm of the resolvent of $\opA_0$
\begin{equation*}
	\|(\opH - \la_b)^{-1} \| = \| (\opA_0 - \mu_b)^{-1} \| \left( V'(x_b) \right)^{-\frac{2}{3}} (1 + o(1)), \quad b \to +\infty,
\end{equation*}
where $x_b > 0$ is the turning point of $V$ determined by $V(x_b) = b$ and $\mu_b = a(b) ( V'(x_b))^{-\frac{2}{3}}$. Employing \eqref{eq:complex.airy.res.norm}, this in turn yields the asymptotic shape of the $\eps$-pseudospectra (with $\eps>0$)
\begin{equation}
	\label{eq:resnorm.iR.levelcurves}
	a = \left(\tfrac34\right)^{\frac23} V'(x_b)^\frac23 (\log(V'(x_b)^\frac23 \eps^{-1}))^{\frac23} (1 + o(1)), \quad b \to +\infty;
\end{equation}
see \cite[Sub-sec.~5.1.1]{ArSi-resolvent-2022} for details and also \cite{BordeauxMontrieux-2013} for results on semi-classical operators.

Operators $A$ with $W(x) = |x|^p$, $p>0$, which we denote by $\opA_p := \Nt + |x|^p$, play a similar role to that outlined above for $\opA_0$ when estimating the norm of the resolvent for Schr\"odinger operators $\opH = -\partial_x^2 + i V$ in $\Lt(\R)$ where $V$ is even, unbounded at infinity and regularly varying with index of variation equal to $p$ (see \cite[Asm.~4.1]{ArSi-resolvent-2022}). Along curves $\la_a = a + i b(a)$  inside the numerical range and adjacent to the real axis (see \cite[Prop.~5.2]{ArSi-resolvent-2022}), we have 
\begin{equation}
	\label{eq:schrod.re.gral.res.norm}
	\| (\opH - \la_a)^{-1} \| = \| (\opA_p - \mu_a)^{-1} \| (V(t_a))^{-1} ( 1 + o(1)), \quad a \to +\infty,
\end{equation}
where $t_a > 0$ is the solution of $t_a V(t_a) = 2 \sqrt{a}$ and $\mu_a = b(a) (V(t_a))^{-1}$. Combining \eqref{eq:schrod.re.gral.res.norm} and \eqref{eq:A.res.intro}, one finds the asymptotic shape of the $\eps$-pseudospectra (with $\eps > 0$)
\begin{equation*}
	b = \left(\tfrac{p + 1}{2p}\right)^{\frac{p}{p + 1}} V(t_a) \left(\log(V(t_a) \eps^{-1})\right)^{\frac{p}{p + 1}} (1 + o(1)), \quad a \to +\infty,
\end{equation*}
see Sub-section~\ref{sssec:schrodinger.level.curves} and \cite[Sub-sec.~5.1.2]{ArSi-resolvent-2022} for details.

Finally, operators of the general form $\opA := \Nt + a(x)$ also emerge in the study of the one-dimensional wave equation with unbounded damping $a(x) \ge 0$. It has been shown in \cite[Prop.~4.12]{arnal2022resolvent-dwe} that, for curves $\la_b = -c(b) + i b$ in the second and third quadrants of the complex plane adjacent to the imaginary axis, the associated quadratic operator family $\opT(\la) = \Dt + q(x) + 2 \la a(x) + \la^2$, with $a, q$ obeying \cite[Asm.~3.1]{arnal2022resolvent-dwe}, satisfies
\begin{equation}
	\label{eq:dwe.quad.res.norm}
	\| \opT(\la_b)^{-1} \| = \| (\opA - c(b))^{-1} \| (2 |b|)^{-1} (1 + o(1)), \quad |b| \to +\infty.
\end{equation}
As a consequence, one can analogously describe the level-curves $\| (\opT(\la_b))^{-1} \| = \eps^{-1}$ with $\eps > 0$. For instance, in the case where $q(x)=0$ and $a(x)= \log \langle x \rangle^p$, $p>0$, one obtains the asymptotic shape
\begin{equation*}
	c = p \log\log(2 b \eps^{-1}) (1 + o(1)), \quad b \to +\infty,
\end{equation*}
see Sub-section~\ref{sssec:dwe.level.curves} and \cite[Sub-sec.~4.2]{arnal2022resolvent-dwe} for details and another example with $a(x) = x^{2n}$, $n \in \N$.

The operators \eqref{eq:gen.A.intro} with $W$ satisfying Assumption~\ref{asm:W.airy} have compact resolvent (see Proposition~\ref{prop:airy.basic.properties}). Moreover, depending on the rate of growth of $W$, they belong to certain ideals of compact operators, \eg~Schatten classes, see Sub-section~\ref{ssec:bandtlow.optimal}. Estimates of the norm of the resolvent for elements in trace ideals of compact operators  go back to the early 20th century with the work of T. Carleman on integral operators. More recently, upper bounds have been found for (abstract) non-normal compact operators in the context of the analysis of their spectral behaviour under perturbation (see \cite{Bandtlow-2004-267, SARIHAN2021QUANTITATIVE, bandtlow2015explicit, bandtlow2008resolvent}). We shall review these bounds in Sub-section~\ref{ssec:bandtlow.optimal} in the light of our own findings. It is remarkable that 
the simple differential operators \eqref{eq:gen.A.intro} (and also one dimensional Schr\"odinger operators with imaginary monomial potentials) are close to exhausting the abstract bounds which do not take into account any particular structure of the operator besides the singular values of the resolvent.

Unlike some of the research mentioned earlier, our methods are relatively elementary. The two main tools used in the proof of Theorem~\ref{thm:airy.norm} are Schur's test with weights (see Sub-section~\ref{ssec:schur.prelim}) and an extension of Laplace's method for integrals depending on a large parameter (see Lemma~\ref{lem:laplace.rplus}). The proof of \eqref{eq:A.res.intro} is structured in three steps:
\begin{enumerate}[wide]
	\item in Proposition~\ref{prop:airy.norm.ubound}, we use Schur's test to prove that a certain integral operator, $\opT_{k_{\la}}$ (see \eqref{eq:Tk.def} and \eqref{eq:kla.def}), is bounded in $\Lt$ and we find an upper bound;
	\item in Proposition~\ref{prop:airy.norm.lbound}, we show that the upper bound found in the previous step is optimal;
	\item in the last step, the theorem's proof, it is shown that $\opT_{k_{\la}}$ is in fact the resolvent operator $(\opA - \la)^{-1}$.
\end{enumerate}

The remainder of the paper is structured as follows. Section~\ref{sec:prelim} introduces our notation and recalls some fundamental tools applied later on. Section~\ref{sec:assumptions} states our assumptions. Section~\ref{sec:resolvent.norm} is devoted to formulating and proving our estimate for the norm of the resolvent. Section~\ref{sec:sg.norm} describes the $C_0$-semigroup generated by $-\opA$ and calculates its norm. In Section~\ref{sec:further.remarks}, we apply our findings in two different contexts. Firstly (Sub-section~\ref{ssec:level.curves}), we show the level curves for the norm of the resolvent in the particular cases outlined earlier (Schr\"odinger operators with complex potentials and the quadratic operator family associated with the damped wave equation). Secondly (Sub-section~\ref{ssec:bandtlow.optimal}), we show that upper bounds for the resolvent of elements in certain classes of compact operators derived in \cite{Bandtlow-2004-267,SARIHAN2021QUANTITATIVE} using abstract methods are in fact almost optimal.

\section{Notation and preliminaries}
\label{sec:prelim}
We write $\Rplus := (0, +\infty)$ and the characteristic function of a set $E$ is denoted by $\chi_E$. We use $\CcR$ to represent the space of smooth functions with compact support. Unless otherwise stated, our underlying Hilbert space shall be $\Lt(\R)$. The $L^2$ inner product shall be denoted by $\langle \cdot, \cdot \rangle$ and the $L^2$ norm by $\| \cdot \|$. In the one-dimensional setting, we will denote the first and second order differential operators by $\Ntp$ and $\Dtp$, respectively. We will also appeal to the Sobolev spaces $\WotR$ and $\WttR$ (see \eg~\cite[Sub-sec.~V.3]{EE} for definitions).

If $\cH$ is a Hilbert space, $\sL(\cH)$ shall denote the (Banach) space of bounded linear operators on $\cH$. For a closed, densely defined linear operator $\opT$ on a Hilbert space $\cH$, we will write as usual $\rho(\opT)$, $\sigma(\opT)$ and $\sigma_{p}(\opT)$ to denote its resolvent set, its spectrum and the set of its eigenvalues, respectively.

If $\cH$ is a Hilbert space, we will represent by $\sK(\cH)$ the set of compact operators on $\cH$, which is a closed two-sided ideal in $\sL(\cH)$. For $0 < p < \infty$, we shall denote by $\cS_p$ the Schatten-von Neumann class (or just Schatten class)
\begin{equation*}
	\cS_p := \left\{\opT \in \sK(\cH) : \| \opT \|_p := \left(\sum_{k = 1}^{\infty} (s_k(\opT))^p\right)^{\frac1{p}} < \infty\right\},
\end{equation*}
where $s_k(\opT)$ represents the $k$th singular value of $\opT$ (such values are assumed to be listed in decreasing order and repeated according to their multiplicity). We refer the interested reader to \eg~\cite{DS2} or \cite{Simon-2005} for details on the theory of these classes.

To avoid introducing multiple constants whose exact value is inessential for our purposes, we write $a \ls b$ to indicate that, given $a,b \ge 0$, there exists a constant $C>0$, independent of any relevant variable or parameter, such that $a \le Cb$. The relation $a \gs b$ is defined analogously whereas $a \approx b$ means that $a \lesssim b$ \textit{and} $a \gs b$.

\subsection{Basic properties of generalised Airy operators}
\label{ssec:baisc.prelim}
It has been shown (see \cite[App.~A]{ArSi-resolvent-2022}) that the following result holds, \ie~$A$ shares the basic properties with the complex Airy operator $A_0$.

\begin{proposition}
	\label{prop:airy.basic.properties}
	Let $W \in \LilocR \cap C^1(\R\setminus [-x_0,x_0])$, for some $x_0 > 0$, and
	\begin{equation}
		\label{eq:genairy.def}
		\opA := \Nt + W, \quad
		\Dom(\opA) := \{ u \in \Lt(\R) \, : \, -u' + W u \in \Lt(\R) \}.
	\end{equation}
	Assuming $W \ge 0$ a.e. and $\lim_{|x| \to +\infty} W(x) = +\infty$, then
	\begin{enumerate}[\upshape (i), wide]
		\item \label{itm:A.maccr} $A$ is densely defined and m-accretive;
		\item $A$ has compact resolvent and $\sigma(\opA) = \emptyset$;
		\item \label{itm:A.star} the adjoint operator reads
		\begin{equation}
			\opA^* = \Ntp + W, \quad
			\Dom(\opA^*) = \{ u \in \Lt(\R) \, : \, u' + W u \in \Lt(\R) \}.
		\end{equation}
	\end{enumerate}
	If, in addition, there exist $\eps \in (0,1)$ and $M> 0$ such that
	\begin{equation}
		\label{eq:A.W.sep.asm}
		|W'(x)| \le \eps |W(x)|^2 + M, \quad |x| > x_0,
	\end{equation}
	then
	\begin{equation}
		\label{eq:A.dom.sep}
		\Dom(\opA) = \Dom(A^*) = W^{1,2}(\R) \cap \Dom(W)
	\end{equation}
	and we have
	\begin{equation}
		\label{eq:genAiry.graph.norm}	
		\begin{aligned}
			\| \opA u \|^2 + \| u \|^2 &\geq C_A \left(\| u' \|^2 + \| W u \|^2 + \| u \|^2 \right), \quad u \in \Dom(A),
			\\
			\| \opA^* u \|^2 + \| u \|^2 &\geq C_{A^*} \left(\| u' \|^2 + \| W u \|^2 + \| u \|^2 \right), \quad u \in \Dom(A^*);
		\end{aligned}
	\end{equation}
	the constants $C_{A}, C_{A^*}>0$ depend only on $\eps$, $M$ and $\|W \chi_{[-x_0,x_0]}\|_\infty$.
\end{proposition}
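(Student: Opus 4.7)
I would organise the proof into two phases. Items (i)--(iii) rest on the first-order ODE structure of $\opA$ and are essentially consequences of an explicit resolvent formula, while the graph-norm equivalence \eqref{eq:genAiry.graph.norm} under the separation condition \eqref{eq:A.W.sep.asm} carries the real technical weight.

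For (i), density is immediate since $\CcR \subset \Dom(\opA)$, and accretivity follows from $\Re\langle \opA u, u\rangle = \int_\R W\,|u|^2\,\dd x \ge 0$ on $\CcR$ (the $-u'$ contribution integrates to zero), extended by approximation. To establish m-accretivity I would solve $(\opA + 1) u = f$ via the integrating-factor method, obtaining the unique $\Lt$-solution
\begin{equation*}
u(x) = \int_x^{+\infty} \e^{-\int_x^s (W(t)+1)\, \dd t}\, f(s)\, \dd s;
\end{equation*}
since $W \ge 0$, the kernel is dominated by $\chi_{\{s>x\}}\e^{-(s-x)}$ and Young's convolution inequality yields $\|u\| \le \|f\|$, so $-1\in\rho(\opA)$ with $\|(\opA+1)^{-1}\| \le 1$. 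For (ii), the same formula shows $(\opA+1)^{-1}$ maps $\Lt(\R)$ continuously into $\WotR \cap \Dom(W)$, which embeds compactly into $\Lt(\R)$ by Rellich on bounded intervals together with the tightness bound $\int_{|x|>R}|u|^2 \le (\inf_{|x|>R} W)^{-2}\|Wu\|^2 \to 0$ provided by $W(x)\to+\infty$. Hence $\sigma(\opA)=\sigma_{\rm p}(\opA)$; but any eigenfunction of $\opA u = \la u$ satisfies $u(x)=u(0)\exp(\int_0^x(W(t)-\la)\,\dd t)$, blowing up super-exponentially as $x\to+\infty$ unless $u\equiv 0$, so $\sigma(\opA)=\emptyset$. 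Item (iii) follows from integration by parts on $\CcR$, giving the formal adjoint $\Ntp + W$; the maximal domain is then characterised by repeating the Volterra construction with integration running from $-\infty$ to $x$.

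The second phase is driven by the identity, valid for $u \in \CcR$,
\begin{equation*}
\|\opA u\|^2 = \|u'\|^2 + \|Wu\|^2 + \int_\R W'\,|u|^2\,\dd x,
\end{equation*}
obtained by expanding $\|{-u'+Wu}\|^2$ and computing $-2\Re\langle u', Wu\rangle = -\int_\R W(|u|^2)'\,\dd x$. I would then integrate by parts on $|x|>x_0$ (where $W \in C^1$) to turn the cross term into $\int_{|x|>x_0} W'|u|^2\,\dd x$, and bound the contribution from $[-x_0,x_0]$ directly by $2\|W\chi_{[-x_0,x_0]}\|_\infty\int_{-x_0}^{x_0}|u||u'|\,\dd x$ and Young's inequality, so that a small fraction of $\|u'\|^2$ is reabsorbed on the left. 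Invoking \eqref{eq:A.W.sep.asm} on $|x|>x_0$ gives $|W'||u|^2 \le \eps W^2|u|^2 + M|u|^2$; absorbing $\eps\|Wu\|^2$ on the left yields
\begin{equation*}
(1-\eps)\|Wu\|^2 + \tfrac12\|u'\|^2 \le \|\opA u\|^2 + C\|u\|^2, \qquad u \in \CcR,
\end{equation*}
with $C = C(\eps, M, \|W\chi_{[-x_0,x_0]}\|_\infty)$. The opposite inequality is trivial, and the sign flip of the cross term for $\opA^*$ does not alter the estimate.

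The main obstacle will be extending this a~priori bound from $\CcR$ to all of $\Dom(\opA)$ and thereby proving the equality $\Dom(\opA)=\Dom(\opA^*)=\WotR\cap\Dom(W)$. My plan is a cut-off plus mollification argument: given $u\in\Dom(\opA)$, set $u_n:=(\eta_n u)*\rho_n$, with $\eta_n$ smooth, equal to $1$ on $[-n,n]$, supported in $[-2n,2n]$, $|\eta_n'|\ls 1/n$, and $\rho_n$ a standard mollifier. The commutator $[\opA,\eta_n]u=-\eta_n' u$ tends to $0$ in $\Lt$ by dominated convergence, and convolution with $\rho_n$ commutes exactly with $-\partial_x$ while its commutator with $W$ is again controlled by \eqref{eq:A.W.sep.asm}. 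This yields $u_n\to u$ in the graph norm, so the a~priori bound transfers to $\Dom(\opA)$, giving the inclusion $\Dom(\opA)\subset\WotR\cap\Dom(W)$; the reverse inclusion is immediate from the definition of $\Dom(\opA)$, and the identical scheme applies to $\opA^*$.
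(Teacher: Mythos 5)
The paper does not reproduce the proof of this proposition (it cites \cite[App.~A]{ArSi-resolvent-2022}), so your proposal can only be judged on its own terms. Your scheme — the integrating-factor formula for $(\opA+1)^{-1}$, the blow-up argument for the eigenvalue equation, the distributional identification of $\opA^*$, and the integration-by-parts identity plus absorption for the graph-norm inequality — is the right toolkit. There is, however, a genuine gap in your compactness argument, and two smaller points are worth flagging.

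You assert that ``the same formula shows $(\opA+1)^{-1}$ maps $\Lt(\R)$ continuously into $\WotR\cap\Dom(W)$'' and then use the tightness bound $\int_{|x|>R}|u|^2\le(\inf_{|x|>R}W)^{-2}\|Wu\|^2$. Both steps presuppose $Wu\in\Lt(\R)$ for $u=(\opA+1)^{-1}f$, which is precisely the separation property \eqref{eq:A.dom.sep}. At the stage of item (ii) the hypothesis \eqref{eq:A.W.sep.asm} has not yet been imposed, and for a general $W\in\LilocR\cap C^1$ with $W\ge0$, $W\to+\infty$, the maximal domain $\{u\in\Lt:-u'+Wu\in\Lt\}$ need not split. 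In particular, the Volterra kernel only yields the monotone bound $\int_x^\infty W(s)\e^{-\int_x^s(W+1)\,\dd t}\,\dd s\le1$ in $s$, which cannot be turned into control of $W(x)\int_x^\infty\e^{-\int_x^s(W+1)\,\dd t}f(s)\,\dd s$ without some monotonicity or derivative control on $W$. The repair is the quadratic-form estimate that accretivity already supplies: for any $u\in\Dom(\opA)$, one has $u\in W^{1,2}_{\loc}(\R)$ (since $W\in\LilocR$), the boundary terms in $\Re\int_{-R}^R(\opA u)\bar u$ vanish along a subsequence $R_n\to\infty$, and monotone convergence gives
\begin{equation*}
\int_\R W\,|u|^2\,\dd x=\Re\langle\opA u,u\rangle\le\|\opA u\|\,\|u\|.
\end{equation*}
This yields $W^{1/2}u\in\Lt(\R)$, bounded by the graph norm, and the tightness becomes $\int_{|x|>R}|u|^2\le(\inf_{|x|>R}W)^{-1}\|W^{1/2}u\|^2\to0$ uniformly on graph-norm bounded sets. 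Combined with the local bound $u'=Wu-\opA u\in\Lt(-R,R)$ (again using only $W\in\LilocR$), a Rellich-plus-tightness diagonal argument gives compactness of the embedding $\Dom(\opA)\hookrightarrow\Lt(\R)$, hence compact resolvent, without ever invoking \eqref{eq:A.W.sep.asm}.

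Two further remarks. First, accretivity on all of $\Dom(\opA)$ is obtained exactly by the integration-by-parts argument above; stating it only on $\CcR$ and invoking ``approximation'' is circular at that stage, since one has not yet shown $\CcR$ to be a core. Second, your single-step approximant $u_n:=(\eta_n u)*\rho_n$ couples the cut-off scale and the mollification scale, and the commutator $[W,\rho_n*]$ must then be controlled on a support growing with $n$ where $W$ is not uniformly continuous; the available bound $|W(x)-W(x-y)|\le(\eps W(x)^2+M)|y|(1+o(1))$ does not decay when $|x|\sim n$ and $|y|\sim1/n$. It is cleaner to decouple the limits: first show that compactly supported elements of $\Dom(\opA)$ are a core via the cut-off commutator $\|\eta_n'u\|\to0$ alone, then mollify a fixed compactly supported $u$, on whose (fixed, bounded) neighbourhood $W$ is bounded and the mollification convergence in graph norm is elementary.
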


\subsection{Schur's test}
\label{ssec:schur.prelim}
Let $\opT_k$ be the integral operator determined by the non-negative kernel $k: \R \times \R \to [0, +\infty)$ \ie~formally
\begin{equation}
	\label{eq:Tk.def}
	\opT_k u(x) = \intR k(x,y) u(y) \dd y, \quad x \in \R.
\end{equation}
Schur's test states that, if there exist strictly positive measurable functions $p, q$ on $\R$ and constants $\alpha, \beta > 0$ such that $k(x,y)$ satisfies
\begin{equation}
	\label{eq:schur.assm.2}
	\begin{aligned}
		&\intR k(x,y) q(y)\dd y \le \alpha p(x) \quad \text{for almost every } x \in \R,\\
		&\intR k(x,y) p(x) \dd x \le \beta q(y) \quad \text{for almost every } y \in \R,
	\end{aligned}
\end{equation}
then the operator $\opT_k$ in \eqref{eq:Tk.def} is well-defined for every $u \in \Lt(\R)$ and
\begin{equation}
	\label{eq:schur.ubound}
	\| \opT_k \| \le \sqrt{\alpha \beta}
\end{equation}
(see \eg~\cite[Thm.~5.2]{Halmos-1978-96}).

\section{Assumptions}
\label{sec:assumptions}
We begin by listing the assumptions that $W$ will obey throughout the rest of the paper.

\begin{asm-sec}
	\label{asm:W.airy}
	Suppose that $W \in \LilocR \cap C^2(\R \setminus [-x_0, x_0])$, for some $x_0 \in \Rplus$, with $W \ge 0$ a.e. and assume further that the following conditions are satisfied:
	\begin{enumerate} [\upshape (i)]
		\item \label{itm:W.even} $W$ is even:
		\begin{equation*}
			W(-x) = W(x), \quad x \in \R;
		\end{equation*}
		\item \label{itm:W.incr.unbd} $W$ is unbounded and eventually increasing in $\Rplus$:
		\begin{equation}
			\label{eq:W.unbd.incr}
			\lim_{x \to +\infty} W(x) = +\infty, \quad W'(x) > 0, \quad x > x_0;
		\end{equation}
		\item \label{itm:W.control.derv} $W$ has controlled derivatives: there exist $\nu \in [-1, +\infty)$ such that
		\begin{equation}
			\label{eq:W.control.derv}
			W'(x) \ls W(x) x^{\nu}, \quad
			|W''(x)| \ls W'(x) x^{\nu}, \quad x > x_0;
		\end{equation}
		\item \label{itm:W.upsilon} $W$ grows sufficiently fast: we have
		\begin{equation}
			\label{eq:W.Ups.decay}
			\Upsilon_1(x) = o(1), \quad |x| \to +\infty,
		\end{equation}
		where
		\begin{equation}
			\label{eq:W.ups.def}
			\Upsilon_1(x) := |x|^{\nu} |W'(x)|^{-\frac12},\quad |x| > x_0.
		\end{equation}
	\end{enumerate}
\end{asm-sec}

\begin{remark}
	\label{rmk:asm}
	We make the following observations based on the above conditions.
	\begin{enumerate}[\upshape (i), wide]
		\item \label{itm:asm.dom.separation} From Assumptions~\ref{asm:W.airy}~\ref{itm:W.control.derv},~\ref{itm:W.upsilon}, we have
		\begin{equation*}
			\frac{|W'(x)|}{(W(x))^2} \ls \frac{|W'(x)|}{|W'(x)|^2 |x|^{-2 \nu}} = (\Upsilon_1(x))^2 = o(1), \quad |x| \to +\infty.
		\end{equation*}
		It follows that \eqref{eq:A.W.sep.asm} holds and therefore the domain of the corresponding generalised Airy operator separates
		\begin{equation}
			\label{eq:airy.op.def}
			\opA = \Nt + W(x), \quad \Dom(\opA) = \WotR \cap \Dom(W).
		\end{equation}
		\item \label{itm:asm.W.variation.cte} Assumption~\ref{asm:W.airy}~\ref{itm:W.control.derv} implies that, for any $0 < \eps < 1$, all sufficiently large $|x|$ and $|\delta| \le \eps |x|^{-\nu}$, we can control the variation of $W$ and that of $W'$ on intervals whose length is of order $|x|^{-\nu}$
		\begin{equation}
			\label{eq:W.approx.cte}
			\frac{W^{(j)}(x + \delta)}{W^{(j)}(x)} \approx 1, \quad j \in \{0, 1\},
		\end{equation}
		(see \cite[Lem.~4.1]{MiSiVi-2020}).
		\item \label{itm:asm.upsilon} The quantity $\Upsilon_1(x)$ in Assumption~\ref{asm:W.airy}~\ref{itm:W.upsilon} is the equivalent, for a first order differential operator such as $\opA$, of $\Upsilon(x)$ for a Schr\"odinger operator in \cite[Asm.~3.1~(iii)]{ArSi-resolvent-2022}.
	\end{enumerate}
\end{remark}

\begin{example}
	\label{ex:W.basic}
	Assumption~\ref{asm:W.airy}~\ref{itm:W.control.derv} encompasses a wide range of unbounded $W$ growing at very different rates, for example:
	\begin{enumerate}[\upshape (i), wide]
		\item logarithmic functions such as $W(x) = \log \langle x \rangle^p$, $p > 0$;
		\item \label{itm:W.poly.basic}polynomial functions such as $W(x) = |x|^p$, $p > 0$ - note that, when $p = 2$, we recover, via the Fourier transform, the complex Airy operator $\opA_0$ (see the comments to this effect in Section~\ref{sec:intro});
		\item exponential functions such as $W(x) = e^{|x|^p}$, $p > 0$.
	\end{enumerate}
\end{example}

\section{The norm of the resolvent operator}
\label{sec:resolvent.norm}
In this section, our aim is to provide an estimate of $\| (\opA - \la)^{-1} \|$ as $\la \to +\infty$. Note that, since $\opA$ is m-accretive, it is immediate that $\| (\opA - \la)^{-1} \| \le 1/|\la|$ for every $\la < 0$. Furthermore, it is sufficient to consider $\la \in \R$ since, for general $\la \in \C$, we have $(\opA - \la)^{-1} = \opU_{\Im \la}^{-1} (\opA - \Re\la)^{-1} \opU_{\Im \la}$, with the isometry $\opU_{a} u(x) = e^{i a x} u(x)$ for any $u \in \Lt(\R)$ and $a \in \R$, and hence $\| (\opA - \la)^{-1} \| = \| (\opA - \Re\la)^{-1} \|$.

\subsection{Statement of the result}
\label{ssec:norm.resolvent.statement}
By Assumption~\ref{asm:W.airy}~\ref{itm:W.incr.unbd}, there exists $\la_0 > 0$ (\eg~take any $\la_0 > \| W \chi_{[0, x_0]} \|_{\infty}$) such that, for every $\la \ge \la_0$, the equation $W(x) = \la$ has a unique solution in $\Rplus$ which we shall denote by $x_{\la}$ (the \textit{turning point} of $W$), \ie
\begin{equation}
	\label{eq:W.tp.def}
	W(x_{\la}) = \la, \qquad x_\la >0.
\end{equation}
Note that
\begin{equation}
	\label{eq:xlapm.unbounded}
	\lim_{\la \to +\infty} x_{\la} = +\infty.
\end{equation}

Furthermore, for every $\la > 0$ we define the real function
\begin{equation}
	\label{eq:fla.def}
	f_{\la}(x) := \int_{0}^{x} (\la - W(t)) \dd t, \quad x \in \R.
\end{equation}
Clearly $f_{\la} \in W^{1,\infty}_{\loc}(\R) \cap C^3(\R \setminus [-x_0,x_0])$, $f_{\la}(0) = 0$ and
\begin{equation}
	\label{eq:fla.deriv}
	f'_{\la}(x) = \la - W(x), \quad \text{a.e. in } \R, \quad f''_{\la}(x) = -W'(x), \quad x \in \R \setminus [-x_0, x_0].
\end{equation}

An elementary analysis provides the following properties for $f_{\la}$.
\begin{lemma}
	\label{lem:fla.prop}
	For $\la > 0$ and $f_{\la}$ as defined by \eqref{eq:fla.def}, we have that $f_{\la}$ is an odd function and in addition
	\begin{enumerate}[\upshape (i)]
		\item \label{itm:fla.unbded} $f_{\la}$ is unbounded
		\begin{equation}
			\label{eq:fla.unbounded}
			\lim_{x \to +\infty} f_{\la}(x) = -\infty.
		\end{equation}
	\end{enumerate}
	Furthermore there exists $\la_0 > 0$ such that for all $\la \ge \la_0$
	\begin{enumerate}[\upshape (i)]
		\setcounter{enumi}{1}
		\item \label{itm:fla.cps} The value $x_{\la}$ defined by \eqref{eq:W.tp.def} is a critical point
		\begin{equation}
			\label{eq:fla.critical.points}
			f'_{\la}(x_{\la}) = 0, \quad f''_{\la}(x_{\la}) < 0.
		\end{equation}
		\item \label{itm:fla.zeroes} There exists $x_{\la,0} > x_{\la}$ (which is unique in $\Rplus$) such that
		\begin{gather}
			\label{eq:fla.zeroes} f_{\la}(x_{\la,0}) = 0,\\
			f_{\la}(x) > 0, \quad x \in (0, x_{\la,0}).
		\end{gather}
	\end{enumerate}
\end{lemma}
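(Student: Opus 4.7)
The three parts of the lemma all follow from essentially elementary real analysis, so the plan is to chain a few monotonicity and sign arguments together.

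First, the oddness of $f_\lambda$ is immediate from Assumption~\ref{asm:W.airy}~\ref{itm:W.even}: since $\lambda - W(t)$ is even in $t$, a change of variable $t \mapsto -t$ in the defining integral \eqref{eq:fla.def} yields $f_\lambda(-x) = -f_\lambda(x)$. For part~\ref{itm:fla.unbded}, I would use Assumption~\ref{asm:W.airy}~\ref{itm:W.incr.unbd}: given $\lambda$, pick $T > x_0$ so large that $W(t) \ge \lambda + 1$ for all $t \ge T$. Then for $x > T$,
\begin{equation*}
  f_\lambda(x) = f_\lambda(T) + \int_T^x (\lambda - W(t))\,\dd t \le f_\lambda(T) - (x - T),
\end{equation*}
which tends to $-\infty$ as $x \to +\infty$.

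For part~\ref{itm:fla.cps}, the key is to fix $\lambda_0$ large enough that the turning point sits in the region where $W$ is smooth and strictly increasing. Concretely, taking any $\lambda_0 > \|W \chi_{[0,x_0]}\|_\infty$ guarantees $x_\lambda > x_0$ for every $\lambda \ge \lambda_0$, since $W(x) \le \lambda_0 < \lambda$ on $[0,x_0]$. Then \eqref{eq:fla.deriv} applies at $x_\lambda$ and gives $f'_\lambda(x_\lambda) = \lambda - W(x_\lambda) = 0$ by the definition \eqref{eq:W.tp.def}, together with $f''_\lambda(x_\lambda) = -W'(x_\lambda) < 0$ by the strict positivity of $W'$ on $(x_0,+\infty)$ in \eqref{eq:W.unbd.incr}.

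For part~\ref{itm:fla.zeroes}, I would analyse the sign of $f'_\lambda$ on $\Rplus$. On $(0,x_0]$, $W(x) \le \lambda_0 < \lambda$, so $f'_\lambda > 0$; on $(x_0, x_\lambda)$, $W$ is strictly increasing with $W(x_\lambda) = \lambda$, so again $W(x) < \lambda$ and $f'_\lambda > 0$; on $(x_\lambda,+\infty)$, the same monotonicity gives $W(x) > \lambda$, hence $f'_\lambda < 0$. Together with $f_\lambda(0) = 0$, this shows $f_\lambda$ strictly increases on $[0,x_\lambda]$ from $0$ to a strictly positive maximum $f_\lambda(x_\lambda)$, then strictly decreases on $[x_\lambda,+\infty)$ to $-\infty$ by part~\ref{itm:fla.unbded}. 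The intermediate value theorem applied to the second branch produces a zero $x_{\lambda,0} > x_\lambda$, and strict monotonicity there forces uniqueness in $(x_\lambda,+\infty)$; combined with $f_\lambda > 0$ on $(0,x_\lambda]$, uniqueness in $\Rplus$ as well as the sign claim $f_\lambda > 0$ on $(0,x_{\lambda,0})$ follow.

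The only mildly delicate point is the calibration of $\lambda_0$: it must be chosen so that $x_\lambda$ lies strictly to the right of $x_0$ (so that the $C^2$ regularity and the strict positivity of $W'$ from Assumption~\ref{asm:W.airy} are available at $x_\lambda$); beyond that, everything is bookkeeping.
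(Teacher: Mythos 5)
Your proof is correct and follows essentially the same route as the paper: oddness from the evenness of $W$, unboundedness by noting that $\lambda - W(t) \le -1$ eventually and integrating, part~\ref{itm:fla.cps} by calibrating $\lambda_0$ so that $x_\lambda > x_0$ and reading off the first and second derivatives from \eqref{eq:fla.deriv}, and part~\ref{itm:fla.zeroes} by a sign analysis of $f'_\lambda$ combined with the intermediate value theorem and strict monotonicity. The only cosmetic difference is that you spell out the calibration of $\lambda_0$ and the split of the sign argument at $x_0$ a bit more explicitly than the paper, which simply cites \eqref{eq:fla.deriv} and Assumption~\ref{asm:W.airy}~\ref{itm:W.incr.unbd}.
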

\begin{proof}
	It is clear that, by \eqref{eq:fla.def} and Assumption~\ref{asm:W.airy}~\ref{itm:W.even}, $f_{\la}$ is odd.
	
	\ref{itm:fla.unbded} Let $\la > 0$ be arbitrary but fixed. Since, by Assumption~\ref{asm:W.airy}~\ref{itm:W.incr.unbd}, we have $W(x) \to +\infty$ for $x \to +\infty$, it follows that there exists $x_1(\la) > 0$ such that
	\begin{equation*}
		\la - W(x) \le -1, \quad x \ge x_1(\la).
	\end{equation*}
	Hence, for some $C_{\la} \in \R$ and any $x \ge x_1(\la)$, we have
	\begin{equation*}
		\begin{aligned}
			f_{\la}(x) = \int_{0}^{x_1(\la)} (\la - W(t)) \dd t + \int_{x_1(\la)}^{x} (\la - W(t)) \dd t \le C_{\la} + x_1(\la) - x.
		\end{aligned}
	\end{equation*}
	We therefore conclude that \eqref{eq:fla.unbounded} holds.
	 
	\ref{itm:fla.cps} Setting $\la_0$ as per definition \eqref{eq:W.tp.def} or larger, the statement is a direct consequence of \eqref{eq:fla.deriv}, \eqref{eq:W.tp.def} and Assumption~\ref{asm:W.airy}~\ref{itm:W.incr.unbd}.
	
	\ref{itm:fla.zeroes} Let $\la_0$ be as in \ref{itm:fla.cps} and let $\la \ge \la_0$ be arbitrary but fixed. Since $f_{\la}(0) = 0$, $f'_{\la}(x) > 0$ for $x \in (0, x_{\la})$, $f'_{\la}(x) < 0$ for $x \in (x_{\la}, +\infty)$ (recall that $f'_{\la}(x) = - W(x) + \la$)  and moreover, by \eqref{eq:fla.unbounded}, $f_{\la}(x)$ is eventually negative, we deduce that there exists a unique $x_{\la,0} > x_{\la}$ such that $f_{\la}(x_{\la,0}) = 0$ and $f_{\la}(x)$ must be strictly positive in $(0, x_{\la,0})$, as claimed.
\end{proof}

The behaviour of $f_{\la}$ is sketched in Fig.~\ref{fig:fla.graph} (see also Definition~\eqref{eq:omegala.def} and Lemma~\ref{lem:omega.subint}).
\begin{figure}[h]
	\includegraphics[scale=0.95]{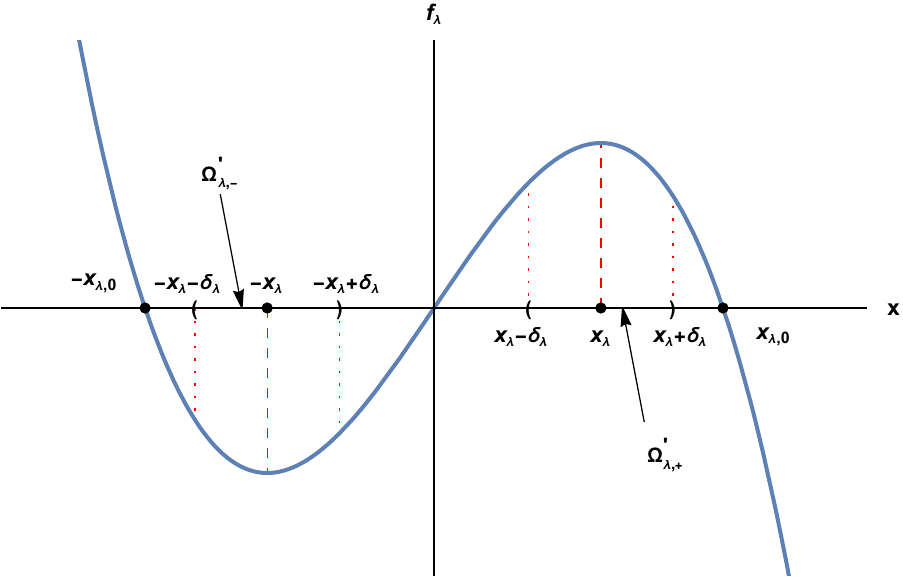}
	\caption{Sketch of $f_{\la}(x)$, defined by \eqref{eq:fla.def}, for sufficiently large $\la$.}
	\label{fig:fla.graph}
\end{figure}

Our main result in this section is the following.

\begin{theorem}
	\label{thm:airy.norm}
	Let $W$ satisfy Assumption~\ref{asm:W.airy}, let $\opA$ be the generalised Airy operator \eqref{eq:airy.op.def} in $\Lt(\R)$ and let $\la > 0$. Then
	\begin{equation}
		\label{eq:A.resnorm}
		\| (\opA - \la)^{-1} \| = \sqrt{\pi} (W'(x_{\la}))^{-\frac12} \exp \left(2 f_{\la}(x_{\la})\right) (1 + o(1)), \quad \la \to +\infty,
	\end{equation}
	where $x_{\la}$ and $f_{\la}$ are defined by \eqref{eq:W.tp.def} and \eqref{eq:fla.def}, respectively.
\end{theorem}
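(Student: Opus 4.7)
The first thing I would do is write the resolvent as an integral operator by solving the first-order ODE $-u' + (W - \la)u = v$. The integrating factor $e^{f_\la(x)}$ turns this into $(e^{f_\la} u)' = -e^{f_\la} v$, and since $f_\la(x) \to -\infty$ as $x \to +\infty$ by Lemma~\ref{lem:fla.prop}~\ref{itm:fla.unbded}, the unique $L^2$-solution is
\begin{equation*}
((\opA - \la)^{-1} v)(x) = \int_x^{+\infty} e^{f_\la(y) - f_\la(x)} v(y) \, \dd y.
\end{equation*}
The task thus reduces to computing the asymptotics of $\| T_{k_\la}\|$ for the integral operator $T_{k_\la}$ with kernel $k_\la(x,y) = e^{f_\la(y) - f_\la(x)} \chi_{\{y > x\}}$, which matches the three-step strategy announced in the introduction.

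Next, I would use the structure of $f_\la$ (Lemma~\ref{lem:fla.prop}) to locate a single \emph{hot spot} for $k_\la$. Since $f_\la$ attains its maximum $f_\la(x_\la)$ at $x_\la$ and, by oddness, its minimum $-f_\la(x_\la)$ at $-x_\la$, the quantity $f_\la(y) - f_\la(x)$ on $\{y > x\}$ is uniquely maximised at $(x,y) = (-x_\la, x_\la)$ with value $2 f_\la(x_\la)$. A Taylor expansion using $f_\la''(\pm x_\la) = \mp W'(x_\la)$ (from $f_\la'' = -W'$ and the oddness of $W'$) shows that, near this peak,
\begin{equation*}
k_\la(x,y) \approx e^{2 f_\la(x_\la)} \exp\Bigl( - \tfrac{W'(x_\la)}{2} \bigl[ (x + x_\la)^2 + (y - x_\la)^2 \bigr] \Bigr),
\end{equation*}
a separable rank-one Gaussian whose $L^2$ operator norm is exactly $\sqrt{\pi/W'(x_\la)} \, e^{2 f_\la(x_\la)}$ — precisely the claimed asymptotic.

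For the upper bound I would apply Schur's test (Sub-section~\ref{ssec:schur.prelim}) with weights $p(x), q(y)$ mirroring this Gaussian model, essentially $p(x) \sim e^{-f_\la(x)}$ and $q(y) \sim e^{f_\la(y)}$ on suitable neighbourhoods of $\pm x_\la$ (smoothly modified outside so that all Schur integrals converge). Evaluating the two Schur integrals then amounts to applying Laplace's method (Lemma~\ref{lem:laplace.rplus}) to $\int e^{2 f_\la}$ and $\int e^{-2 f_\la}$ around their respective extrema at $\pm x_\la$, giving $\alpha, \beta = \sqrt{\pi/W'(x_\la)} \, e^{2 f_\la(x_\la)} (1 + o(1))$ and hence the desired upper bound $\sqrt{\alpha \beta}$. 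For the matching lower bound I would test $T_{k_\la}$ on a Gaussian trial function $u_\la(y)$ localised at $x_\la$ of width $\sim W'(x_\la)^{-1/2}$; by the same Taylor expansion, $T_{k_\la} u_\la$ is (approximately) a Gaussian of the same width concentrated at $-x_\la$ multiplied by the predicted scalar factor, and Laplace's method applied to $\| T_{k_\la} u_\la \| / \|u_\la\|$ reproduces the same asymptotic from below. The reduction to real $\la$ is furnished by the unitary equivalence $(\opA - \la)^{-1} = \opU_{\Im \la}^{-1} (\opA - \Re \la)^{-1} \opU_{\Im \la}$ noted at the start of Section~\ref{sec:resolvent.norm}.

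\emph{The main obstacle} is capturing the sharp prefactor $\sqrt{\pi/W'(x_\la)}$ rather than the $\sqrt{2\pi/W'(x_\la)}$ that one would get from Schur with the trivial weights $p = q = 1$. This forces the Schur weights to be aligned with the rank-one Gaussian approximation of $k_\la$ near its peak, and one must control the contributions from regions far from $(-x_\la, x_\la)$ — in particular from the unbounded tails where $f_\la$ diverges in the opposite sign and the natural weights cease to be integrable — and show they are absorbed into the $(1+o(1))$ error. The controlled-derivative condition in Assumption~\ref{asm:W.airy}~\ref{itm:W.control.derv}, together with the decay $\Upsilon_1 = o(1)$ from~\ref{itm:W.upsilon}, should deliver the uniform remainder estimates required for Laplace's method on the relevant scales.
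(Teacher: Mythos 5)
Your proposal follows the paper's proof essentially line by line: you identify the same kernel $k_\lambda(x,y)=e^{f_\lambda(y)-f_\lambda(x)}\chi_{\{y>x\}}$ for the resolvent, use Schur's test with the same weights $p_\lambda\sim e^{-f_\lambda}$, $q_\lambda\sim e^{f_\lambda}$ (flattened outside a window around $\pm x_\lambda$, exactly as in~\eqref{eq:A.schur.p}--\eqref{eq:A.schur.q}), invoke Lemma~\ref{lem:laplace.rplus} for the two Schur integrals, obtain the matching lower bound with a trial function concentrated near $x_\lambda$ (the paper's $v_\lambda$ in~\eqref{eq:vla.def} is precisely $e^{f_\lambda}\chi_{\Omega'_{\lambda,+}}$, the non-Gaussian original of your Gaussian model), and reduce $\Im\lambda$ away by the stated unitary conjugation. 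You have also correctly identified the point of the non-trivial weights — recovering $\sqrt{\pi}$ rather than $\sqrt{2\pi}$ — and the role of Assumption~\ref{asm:W.airy}~\ref{itm:W.control.derv}--\ref{itm:W.upsilon} in the remainder control; this is the same mechanism carried out in Propositions~\ref{prop:airy.norm.ubound} and~\ref{prop:airy.norm.lbound}.
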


\begin{example}
	\label{ex:W.resolvent}
	In order to illustrate Theorem~\ref{thm:airy.norm}, we re-visit the instances shown in Example~\ref{ex:W.basic}.
	\begin{enumerate}[\upshape (i), wide]
		\item For the slowly-growing function $W(x) = \log \langle x \rangle^{p}$, $p > 0$, it is straightforward to verify that $W'(x) = p x \langle x \rangle^{-2}$, $f_{\la}(x) = (\la + p) x - x W(x) - p \arctan(x)$ and $x_{\la} = \sqrt{\exp(2 \la / p) - 1}$. This yields the estimate
		\begin{equation}
			\label{eq:A.resnorm.ex.log}
			\| (\opA - \la)^{-1} \| = \sqrt{\frac{\pi}{p}} \exp\left(2 p \sqrt{\exp \left(\frac{2\la}{p} \right) - 1} + \frac{\la}{2 p} - p \pi\right) (1 + o(1)), \quad \la \to +\infty.
		\end{equation}
		\item \label{itm:W.poly.resolvent}For $W(x) = |x|^{p}$, $p > 0$, we have $W'(x) = p \sgn(x) |x|^{p - 1}$, $f_{\la}(x) = x (\la - W(x) / (p + 1))$, $x_{\la} = \la^{1/p}$ and the estimate
		\begin{equation}
			\label{eq:A.resnorm.ex.poly}
			\| (\opA - \la)^{-1} \| = \sqrt{\frac{\pi}{p}} \la^{\frac{1 - p}{2 p}} \exp\left(\frac{2 p}{p + 1} \la^{\frac{1 + p}{p}}\right) (1 + o(1)), \quad \la \to +\infty.
		\end{equation}
		We note that, in the particular case $p = 2$ (the complex Airy operator on Fourier space), we recover the estimate found elsewhere (see \eg~\cite[Cor.~1.4]{BordeauxMontrieux-2013})
		\begin{equation*}
			\| (\opA - \la)^{-1} \| = \sqrt{\frac{\pi}{2}} \la^{-\frac14} \exp\left(\frac43 \la^{\frac32}\right) (1 + o(1)), \quad \la \to +\infty.
		\end{equation*}
		\item \label{itm:W.exp.resolvent}For the fast-growing function $W(x) = e^{|x|^{p}}$, $p > 0$, one can similarly check that $W'(x) = p \sgn(x) |x|^{p-1} W(x)$, $f_{\la}(x) = \la x - W(x) F_p(x)$, where $F_p(x)$ denotes the generalised Dawson's integral (see \cite[Sec.~7.16]{DLMF} or \cite[Eq.~1.1]{dijkstra1977continued}), and $x_{\la} = (\log(\la))^{\frac1{p}}$. The estimate of the resolvent norm as $\la \to +\infty$ is
		\begin{equation*}
			\begin{aligned}
				\| (\opA - \la)^{-1} \| &= \sqrt{\frac{\pi}{p}} \la^{-\frac12} (\log(\la))^{\frac{1-p}{p}} \exp\left(2 \la \left((\log(\la))^{\frac1{p}} - F_p\left((\log(\la))^{\frac1{p}}\right)\right)\right)\\
				&\hspace{3.5cm} (1 + o(1))
			\end{aligned}
		\end{equation*}
		(note that $F_p(x) = (1 / p) x^{1-p} (1 + \BigO(x^{-p}))$ for $x \to +\infty$, see \eg~\cite[Secs.~7.16,~8.11]{DLMF} or \cite[Eq.~2.5]{dijkstra1977continued}).
	\end{enumerate}
\end{example}

\subsection{Proof of Theorem~\ref{thm:airy.norm}}
\label{ssec:norm.resolvent.proof}
To prove the theorem, we shall firstly introduce a certain $\Lt(\R)$ integral operator and show that it is bounded, establishing in the process that the right-hand side of \eqref{eq:A.resnorm} is an (asymptotic) upper bound for its norm (Proposition~\ref{prop:airy.norm.ubound}). We will then proceed to prove that the upper bound is in fact optimal (Proposition~\ref{prop:airy.norm.lbound}). Finally, in the theorem's proof proper, we show that our integral operator is indeed the resolvent $(\opA - \la)^{-1}$.

We begin by proving two lemmas that describe some further important properties of $f_{\la}$. Let
\begin{equation}
	\label{eq:omegala.def}
	\Omega'_{\la,\pm} := \left( \pm x_{\la} - \delta_{\la}, \pm x_{\la} + \delta_{\la} \right), \quad \delta_{\la} := \delta x_{\la}^{-\nu}, \quad 0 < \delta < \frac 14,
\end{equation}
where $\delta$ will be determined in Lemmas~\ref{lem:omega.subint} and~\ref{lem:laplace.rplus} and $\nu \ge -1$ (see Assumption~\ref{asm:W.airy}~\ref{itm:W.control.derv}). The above choice for the width of $\Omega'_{\la,\pm}$ implies that $W(x)$ is approximately equal to $W(x_{\la})$ inside those intervals (see Remark~\ref{rmk:asm}~\ref{itm:asm.W.variation.cte}), a fact which we shall repeatedly rely upon in the proofs below.

\begin{lemma}
	\label{lem:omega.subint}
	Let $W$ satisfy Assumption~\ref{asm:W.airy} and let $x_{\la}$, $x_{\la,0}$ and $\Omega'_{\la,+}$ be as defined in \eqref{eq:W.tp.def}, \eqref{eq:fla.zeroes} and \eqref{eq:omegala.def}, respectively. Then as $\la \to +\infty$, we have:
	\begin{enumerate}[\upshape (i)]
		\item \label{itm:omega.approx.cte}
		\begin{equation}
			\label{eq:omega.approx.cte}
			x_{\la} \pm \delta_{\la} \approx x_{\la}.
		\end{equation}
		\item \label{itm:omega.subset.fla.zeroes}
		\begin{equation}
			\label{eq:omega.subset.fla.zeroes}
			\Omega'_{\la,+} \subset (0, x_{\la,0}).
		\end{equation}
	\end{enumerate}
\end{lemma}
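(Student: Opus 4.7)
Part (i) follows directly from $\nu \ge -1$: since $\delta_\la/x_\la = \delta\,x_\la^{-(1+\nu)}$ and $x_\la \to +\infty$ by~\eqref{eq:xlapm.unbounded}, this ratio stays bounded by $\delta < 1/4$ (and vanishes as $\la \to +\infty$ when $\nu > -1$), so $(1-\delta)\,x_\la \le x_\la - \delta_\la$ and $x_\la + \delta_\la \le (1+\delta)\,x_\la$, giving \eqref{eq:omega.approx.cte}.

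Part (ii) has two halves. The left endpoint $x_\la - \delta_\la > 0$ for large $\la$ is immediate from part (i) combined with~\eqref{eq:xlapm.unbounded}. For the right endpoint $x_\la + \delta_\la < x_{\la,0}$, I use that $f_\la$ is strictly decreasing on $(x_\la, +\infty)$ (because $f'_\la = \la - W < 0$ there by Assumption~\ref{asm:W.airy}~\ref{itm:W.incr.unbd}) and vanishes at $x_{\la,0}$ by Lemma~\ref{lem:fla.prop}~\ref{itm:fla.zeroes}; the inclusion therefore reduces to showing $f_\la(x_\la + \delta_\la) > 0$.

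To establish this I compare two quantities that both scale as $W'(x_\la)\,x_\la^{-2\nu}$. Since $f'_\la(x_\la) = 0$ and $f''_\la = -W'$, Taylor's theorem gives
\begin{equation*}
	f_\la(x_\la) - f_\la(x_\la + \delta_\la) = \int_0^{\delta_\la}\!\int_0^s W'(x_\la + r)\,dr\,ds;
\end{equation*}
on this range Remark~\ref{rmk:asm}~\ref{itm:asm.W.variation.cte} applies (since $|r| \le \delta_\la \le \delta\,x_\la^{-\nu}$ with $\delta < 1/4$) and yields $W'(x_\la + r) = W'(x_\la)(1 + o(1))$, so the drop equals $\tfrac{\delta^2}{2}\,W'(x_\la)\,x_\la^{-2\nu}(1 + o(1))$ as $\la \to +\infty$. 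For a matching \emph{lower} bound on $f_\la(x_\la)$, fix once and for all a constant $M \in (\delta, 1)$ (for instance $M = 1/2$); since $\la - W \ge 0$ on $[0, x_\la]$ once $\la > \|W\chi_{[0,x_0]}\|_\infty$, I may drop the nonnegative integrand on $[0, x_\la - M x_\la^{-\nu}]$ and swap the order of integration to obtain
\begin{equation*}
	f_\la(x_\la) \ge \int_{x_\la - M x_\la^{-\nu}}^{x_\la} \bigl(s - x_\la + M x_\la^{-\nu}\bigr) W'(s)\,ds.
\end{equation*}
A second application of Remark~\ref{rmk:asm}~\ref{itm:asm.W.variation.cte} (now with $\eps = M$) yields $f_\la(x_\la) \ge \tfrac{M^2}{2}\,W'(x_\la)\,x_\la^{-2\nu}(1 + o(1))$. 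Subtracting the two estimates,
\begin{equation*}
	f_\la(x_\la + \delta_\la) \ge \tfrac{1}{2}(M^2 - \delta^2)\,W'(x_\la)\,x_\la^{-2\nu}(1 + o(1)) > 0
\end{equation*}
for all sufficiently large $\la$, since $M > \delta$, which is precisely what is needed.

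The main obstacle I anticipate is exactly this scale match: both the Taylor drop and the baseline $f_\la(x_\la)$ are of the same natural size $W'(x_\la)\,x_\la^{-2\nu}$, so a purely local Taylor bound using a $\delta$-sized window would be circular. The key device is to secure the lower bound on $f_\la(x_\la)$ using a window of fixed width $M x_\la^{-\nu}$ (with $M > \delta$ chosen \emph{before} $\delta$ is shrunk), reducing the comparison to the clean inequality $M^2 > \delta^2$.
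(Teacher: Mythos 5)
Your proof is correct in structure, and for part~\ref{itm:omega.subset.fla.zeroes} it takes a genuinely different route from the paper's. The paper lower-bounds $f_\la(x_\la)$ by an integration by parts argument that captures the contribution of the \emph{entire} interval $[x_0, x_\la]$, yielding $f_\la(x_\la) \gs W'(x_\la)\, x_\la^{1-\nu}$, and then expands $f_\la$ to third order around $x_\la$; the quantity $(\Upsilon_1(x_\la))^{-2}/f_\la(x_\la) = \BigO(1)$ extracted this way (see \eqref{eq:fla.res.1}) is reused in the proof of Lemma~\ref{lem:laplace.rplus}, so the extra work has a downstream payoff. You instead lower-bound $f_\la(x_\la)$ by looking only at a local window $[x_\la - M x_\la^{-\nu}, x_\la]$, obtaining the weaker (but scale-matched) estimate $\gs \tfrac{M^2}{2}\, W'(x_\la)\, x_\la^{-2\nu}$, and represent the Taylor drop as a double integral of $W'$ directly. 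This is cleaner for the lemma at hand, and your observation that $M$ must be fixed before $\delta$ is shrunk is exactly the right way to avoid circularity.

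One imprecision to flag: Remark~\ref{rmk:asm}~\ref{itm:asm.W.variation.cte} gives $W'(x_\la + r)/W'(x_\la) \approx 1$ with implicit constants depending on the window parameter $\eps$ but \emph{not} tending to $1$ as $\la \to +\infty$ (the window $\eps\, x_\la^{-\nu}$ does not shrink when $\nu \le 0$). So writing $W'(x_\la + r) = W'(x_\la)(1 + o(1))$ as $\la \to +\infty$, and consequently the drop as $\tfrac{\delta^2}{2}W'(x_\la)x_\la^{-2\nu}(1+o(1))$ and the lower bound as $\tfrac{M^2}{2}W'(x_\la)x_\la^{-2\nu}(1+o(1))$, overstates what is available. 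The correct statement is that the drop is $\le C_\delta\, \tfrac{\delta^2}{2}W'(x_\la)x_\la^{-2\nu}$ and $f_\la(x_\la) \ge c_M\, \tfrac{M^2}{2}W'(x_\la)x_\la^{-2\nu}$ with constants $C_\delta, c_M > 0$ depending on $\delta, M$ but uniform in $\la$. The argument survives: with $M$ fixed, $c_M M^2 > 0$ is a fixed positive number, while $C_\delta \delta^2 \to 0$ as $\delta \to 0$, so $\delta$ small enough (relative to $M$, not merely $\delta < M$) yields $f_\la(x_\la + \delta_\la) > 0$ for all large $\la$. You should phrase the final subtraction in these terms rather than via the asserted $(1+o(1))$ factors.
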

\begin{proof}
	\ref{itm:omega.approx.cte} By \eqref{eq:xlapm.unbounded} and $\nu \ge -1$, we find as $\la \to +\infty$
	\begin{equation*}
		x_{\la} \pm \delta_{\la} = x_{\la} (1 \pm \delta x_{\la}^{-1 - \nu}) \approx x_{\la}.
	\end{equation*}

	\ref{itm:omega.subset.fla.zeroes} Since $x_\la - \delta_{\la} > 0$ as $\la \to +\infty$ by \ref{itm:omega.approx.cte}, it is enough to prove $f_{\la}(x_{\la}+\delta_{\la}) > 0$ (see Lemma~\ref{lem:fla.prop}) for $\la \to +\infty$. For $\la_0$ as in the statement of Lemma~\ref{lem:fla.prop} and any $\la \ge \la_0$, we have (with $x_0$ as in Assumption~\ref{asm:W.airy} and applying \eqref{eq:fla.deriv} and integration by parts)
	\begin{equation*}
		\begin{aligned}
			f_{\la}(x_{\la}) &= \int_{0}^{x_{\la}} (\la - W(t)) \dd t = \int_{0}^{x_0} (\la - W(t)) \dd t + \int_{x_0}^{x_{\la}} (\la - W(t)) \dd t\\
			&= \int_{0}^{x_0} (\la - W(t)) \dd t - (\la - W(x_0)) x_0 + \int_{x_0}^{x_{\la}} t W'(t) \dd t\\
			&= \int_{0}^{x_0} (W(x_0) - W(t)) \dd t + \int_{x_0}^{x_{\la}} t W'(t) \dd t.
		\end{aligned}
	\end{equation*}
	The first term in the preceding line is a constant whereas the second one diverges as $\la \to +\infty$ (we may assume $x_0 \ge 1$), hence
	\begin{equation*}
		f_\la(x_{\la}) = \int_{x_0}^{x_{\la}} t W'(t) \dd t \; (1 + o(1)) \gs \int_{x_{\la} - \eps x_{\la}^{-\nu}}^{x_{\la}} t W'(t) \dd t,
	\end{equation*}
	for sufficiently small $\eps > 0$ and $\la \to +\infty$. By \eqref{eq:W.approx.cte}, we deduce that $\min \{W'(t) : t \in [x_{\la} - \eps x_{\la}^{-\nu}, x_{\la}]\} \approx W'(x_{\la})$. Furthermore, $x_{\la} - \eps x_{\la}^{-\nu} \approx x_{\la}$ for small enough $\eps > 0$ by \eqref{eq:omega.approx.cte}. It follows
	\begin{equation}
		\label{eq:fla.xla.lbound}
		f_{\la}(x_{\la}) \gs x_{\la}  W'(x_{\la}) \eps x_{\la}^{-\nu} \approx W'(x_{\la}) x_{\la}^{1 - \nu}, \quad \la \to +\infty.
	\end{equation}
	Therefore by \eqref{eq:fla.xla.lbound} and \eqref{eq:xlapm.unbounded}
	\begin{equation}
		\label{eq:fla.res.1}
		\frac{(\Upsilon_1(x_{\la}))^{-2}}{f_{\la}(x_{\la})} = \frac{W'(x_{\la})}{f_{\la}(x_{\la}) x_{\la}^{2 \nu}} \ls \frac{W'(x_{\la})}{W'(x_{\la}) x_{\la}^{1 + \nu}} = \BigO(1), \quad \la \to +\infty,
	\end{equation}
	and hence (with $\xi_{\la} \in (x_{\la}, x_{\la} + \delta_{\la})$ and applying \eqref{eq:fla.deriv})
	\begin{equation}
		\label{eq:fla.taylor}
		\begin{aligned}
			f_{\la}(x_{\la} + \delta_{\la}) &= f_{\la}(x_{\la}) + f'_{\la}(x_{\la}) \delta_{\la} + \frac12 f''_{\la}(x_{\la}) \delta_{\la}^2 + \frac16 f'''_{\la}(\xi_{\la}) \delta_{\la}^3\\
			&= f_{\la}(x_{\la}) \left(1 - \frac12 \frac{W'(x_{\la})}{f_{\la}(x_{\la})} \delta_{\la}^2 \left(1 + \frac13 \frac{W''(\xi_{\la})}{W'(x_{\la})} \delta_{\la}\right)\right)\\
			&= f_{\la}(x_{\la}) \left(1 - \frac12 \delta^2 \frac{(\Upsilon_1(x_{\la}))^{-2}}{f_{\la}(x_{\la})} \left(1 + \frac13 \frac{W''(\xi_{\la})}{W'(x_{\la})} \delta_{\la}\right)\right).
		\end{aligned}
	\end{equation}
	By \eqref{eq:fla.res.1}, we obtain $\delta^2 (\Upsilon_1(x_{\la}))^{-2} / f_{\la}(x_{\la}) = \BigO(\delta^2)$ and, by Assumption~\ref{asm:W.airy}~\ref{itm:W.control.derv}, \eqref{eq:omega.approx.cte} and \eqref{eq:W.approx.cte}, we deduce $|W''(\xi_{\la)}| \delta_{\la} / W'(x_{\la}) \ls \delta$. Hence, using \eqref{eq:fla.taylor} and choosing $\delta > 0$ sufficiently small in \eqref{eq:omegala.def}, we have
	\begin{equation*}
		f_{\la}(x_{\la} + \delta_{\la}) > f_{\la}(x_{\la}) \left(1 - \BigO(\delta^2)\right) > 0, \quad \la \to +\infty,
	\end{equation*}
	as claimed.
\end{proof}

Our next result is the critical ingredient in the calculation of estimate \eqref{eq:A.resnorm}. It uses the idea of Laplace's method and slightly extends it to provide suitable estimates for the proof of our claim.

\begin{lemma}
	\label{lem:laplace.rplus}
	Let $\la > 0$ and let $x_{\la}, \Omega'_{\la,\pm}, f_{\la}(x)$ be as defined by \eqref{eq:W.tp.def}, \eqref{eq:omegala.def} and \eqref{eq:fla.def}, respectively. Then for any $M > 0$ and with $\la \to +\infty$
	\begin{align}
		\label{eq:laplace.Ilaplus} I_{\la,+} := \int_{\Omega'_{\la,+}} e^{M f_{\la}(x)} \dd x &= \sqrt{\frac{2 \pi}{M}} (W'(x_{\la}))^{-\frac12} e^{M f_{\la}(x_{\la})} (1 + o(1)), \quad\\
		\label{eq:laplace.Ilaminus} I_{\la,-} := \int_{\Omega'_{\la,-}} e^{M f_{\la}(x)} \dd x &= \sqrt{\frac{2 \pi}{M}} (W'(x_{\la}))^{-\frac12} e^{M f_{\la}(x_{\la})} o(1). \quad
	\end{align}
\end{lemma}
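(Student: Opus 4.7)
The plan is to treat $I_{\la,+}$ by a standard Laplace argument about the maximum of $f_\la$ at $x_\la$, and to dispose of $I_{\la,-}$ via the crude observation that $-x_\la$ is a \emph{minimum} of $f_\la$, so the contribution of $\Omega'_{\la,-}$ is exponentially smaller than the target asymptotic of $I_{\la,+}$.

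For $I_{\la,+}$, I would Taylor-expand $f_\la$ around $x_\la$ using \eqref{eq:fla.critical.points}:
\begin{equation*}
f_\la(x_\la+y) = f_\la(x_\la) - \tfrac12 W'(x_\la) y^2 - \tfrac16 W''(\xi) y^3, \quad \xi \in (x_\la - \delta_\la, x_\la + \delta_\la).
\end{equation*}
Assumption~\ref{asm:W.airy}~\ref{itm:W.control.derv} together with \eqref{eq:W.approx.cte} yields $|W''(\xi)| \ls W'(x_\la) x_\la^\nu$, so the cubic remainder is $\ls W'(x_\la) x_\la^\nu |y|^3$. The change of variables $u = y\sqrt{W'(x_\la)}$ then gives
\begin{equation*}
I_{\la,+} = e^{M f_\la(x_\la)} (W'(x_\la))^{-\frac12} \int_{-T_\la}^{T_\la} e^{-\frac{M}{2} u^2 + R_\la(u)} \dd u, \qquad |R_\la(u)| \ls \Upsilon_1(x_\la) |u|^3,
\end{equation*}
with $T_\la := \delta/\Upsilon_1(x_\la) \to +\infty$ by \eqref{eq:W.Ups.decay}. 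For each fixed $u \in \R$, $R_\la(u) \to 0$, so the integrand converges pointwise to $e^{-Mu^2/2}$. Uniformly in $|u| \le T_\la$, one has $|R_\la(u)| \le C \Upsilon_1(x_\la) T_\la u^2 = C\delta u^2$, so for $\delta$ small enough (say $C\delta \le M/4$) the integrand is dominated by the integrable $e^{-M u^2/4}$. Dominated convergence then yields the Gaussian $\sqrt{2\pi/M}$ and hence \eqref{eq:laplace.Ilaplus}.

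For $I_{\la,-}$, the oddness of $f_\la$ (Lemma~\ref{lem:fla.prop}) gives $f_\la(-x_\la) = -f_\la(x_\la)$ and, since $W'$ is odd, $f''_\la(-x_\la) = W'(x_\la) > 0$, so $-x_\la$ is a local minimum and $f_\la$ attains its maximum on $\Omega'_{\la,-}$ at one of the endpoints. The analogous Taylor expansion delivers
\begin{equation*}
\max_{\Omega'_{\la,-}} f_\la \le -f_\la(x_\la) + \tfrac12 W'(x_\la) \delta_\la^2\,(1 + O(\delta)) = -f_\la(x_\la) + \tfrac{\delta^2}{2}(\Upsilon_1(x_\la))^{-2}(1 + O(\delta)).
\end{equation*}
The crude bound $I_{\la,-} \le 2\delta_\la \exp(M \max_{\Omega'_{\la,-}} f_\la)$, divided by the right-hand side of \eqref{eq:laplace.Ilaminus}, becomes
\begin{equation*}
\frac{I_{\la,-}}{\sqrt{2\pi/M}\,(W'(x_\la))^{-\frac12} e^{M f_\la(x_\la)}} \ls \frac{\delta}{\Upsilon_1(x_\la)} \exp\!\left(M\left[-2 f_\la(x_\la) + \tfrac{\delta^2}{2}(\Upsilon_1(x_\la))^{-2}(1+O(\delta))\right]\right).
\end{equation*}
Invoking \eqref{eq:fla.res.1}, $(\Upsilon_1(x_\la))^{-2} \ls f_\la(x_\la)$, so the exponent is at most $-M f_\la(x_\la)(2 - C\delta^2(1+O(\delta)))$, which for $\delta$ small enough is $\le -M f_\la(x_\la)$. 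Since $\Upsilon_1(x_\la)^{-1} \ls \sqrt{f_\la(x_\la)}$ (from $\nu \ge -1$ and \eqref{eq:fla.xla.lbound}) and $f_\la(x_\la) \gs \la \to +\infty$ (by splitting $\int_0^{x_\la}(\la - W)\,\dd t$ at $x_0$), the whole ratio tends to zero, proving \eqref{eq:laplace.Ilaminus}.

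The main obstacle is controlling the cubic remainder so that the Gaussian shape of the integrand survives the scaling. This forces $\delta$ to be chosen \emph{small enough}, beyond the constraint already imposed by Lemma~\ref{lem:omega.subint}; the same smallness is needed in the estimate for $I_{\la,-}$ to absorb the dangerous growth $\exp(\tfrac{M\delta^2}{2}(\Upsilon_1(x_\la))^{-2})$ into the decay $e^{-2 M f_\la(x_\la)}$.
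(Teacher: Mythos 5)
Your proof is correct and follows essentially the same route as the paper's: a rescaling by $(W'(x_\la))^{-1/2}$ reducing $I_{\la,+}$ to a Gaussian integral via dominated convergence, with the cubic Taylor remainder controlled by $\Upsilon_1(x_\la)\,o(1)$ so that taking $\delta$ small enough ensures a uniform Gaussian dominant. The treatment of $I_{\la,-}$ is where you deviate slightly: the paper keeps the Gaussian structure after the change of variable and uses the elementary asymptotic $e^{-x^2} x \int_0^x e^{y^2}\,\dd y = \BigO(1)$, which yields the ratio $\ls \delta^{-1}\Upsilon_1(x_\la)\,e^{-2Mf_\la(x_\la)(1-\BigO(\delta^2))}$. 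This tends to zero using only $\Upsilon_1(x_\la)=o(1)$ and the sign $f_\la(x_\la)>0$; you instead bound $I_{\la,-}$ by $|\Omega'_{\la,-}|\cdot\exp(M\max_{\Omega'_{\la,-}}f_\la)$, obtaining the prefactor $\delta\,\Upsilon_1(x_\la)^{-1}$ (which \emph{grows}), and must therefore additionally invoke $\Upsilon_1(x_\la)^{-1}\ls\sqrt{f_\la(x_\la)}$ (from \eqref{eq:fla.res.1}) and $f_\la(x_\la)\gs\la\to+\infty$ to let the exponential absorb the growth. Both arguments are valid; the paper's tail estimate is a little more economical since it does not need the divergence of $f_\la(x_\la)$, only its positivity. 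The key smallness requirements you identify for $\delta$ — to keep the cubic remainder harmless in $I_{\la,+}$ and to keep the exponent in $I_{\la,-}$ negative — match the paper's.
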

\begin{proof}
	For $\la_0 > 0$ as in Lemma~\ref{lem:fla.prop} and any $\la \ge \la_0$, let $\rho := |f''_{\la}(x_{\la})|^{-\frac12} > 0$ and $\Omega'_{\la,\rho} := (-\delta_{\la} \rho^{-1}, \delta_{\la} \rho^{-1})$. For every $y \in \Omega'_{\la,\rho}$, we define
	\begin{equation}
		\label{eq:gla.def}
		g_{\la}(y) := f_{\la}(\rho y + x_{\la}).
	\end{equation}
	Taylor-expanding $g_{\la}$ around $0$ and using the facts that $g'_{\la}(0) = \rho f'_{\la}(x_{\la}) = 0$ (refer to \eqref{eq:W.tp.def} and \eqref{eq:fla.deriv}) and $g''_{\la}(0) = \rho^2 f''_{\la}(x_{\la}) = -1$ (by Assumption~\ref{asm:W.airy}~\ref{itm:W.incr.unbd}, \eqref{eq:fla.deriv} and our choice of $\rho$) we obtain
	\begin{equation}
		\label{eq:gla.taylor}
		g_{\la}(y) = g_{\la}(0) - \frac12 y^2 + \frac16 g'''_{\la}(s y) y^3, \quad y \in \Omega'_{\la,\rho},
	\end{equation}
	where $s = s(y, \la)$ and $0 < s < 1$. Furthermore, by \eqref{eq:fla.deriv} and Assumption~\ref{asm:W.airy}~\ref{itm:W.control.derv}
	\begin{equation*}
		\begin{aligned}
			|g'''_{\la}(s y)| &= |f'''_{\la}(\rho s y + x_{\la})| \rho^3 = \frac{|W''(\rho s y + x_{\la})|}{(W'(x_{\la}))^{\frac32}}\\
			&\ls \frac{W'(\rho s y + x_{\la})}{(W'(x_{\la}))^{\frac32}} (\rho s y + x_{\la})^{\nu}.
		\end{aligned}
	\end{equation*}
	For any $y \in \Omega'_{\la,\rho}$, we have $|\rho s y| \le \frac12 x_{\la}^{-\nu}$ by \eqref{eq:omegala.def} and hence $(x_{\la}^{-1} \rho s y + 1)^{\nu} \approx 1$, \ie~$(\rho s y + x_{\la})^{\nu} \approx x_{\la}^{\nu}$. Combining this fact with \eqref{eq:W.approx.cte}, we deduce
	\begin{equation}
		\label{eq:gla.remainder.estimate.1}
		|g'''_{\la}(s y)| \ls (\rho s y + x_{\la})^{\nu} (W'(x_{\la}))^{-\frac12} \ls \Upsilon_1(x_{\la}), \quad y \in \Omega'_{\la,\rho}.
	\end{equation}
	Observing that $\delta_{\la} \rho^{-1} = (\Upsilon_1(x_{\la}))^{-1} \delta$, we have $|g'''_{\la}(s y) y| \ls \delta$ for any $y \in \Omega'_{\la,\rho}$ and consequently, by choosing a sufficiently small $\delta$ in \eqref{eq:omegala.def}, we get
	\begin{equation}
		\label{eq:gla.remainder.estimate.2}
		1 - \frac13 g'''_{\la}(s y) y \approx 1.
	\end{equation}

	In order to estimate $I_{\la,+}$, we change variable $x \to \rho y + x_{\la}$ and apply \eqref{eq:gla.def} and \eqref{eq:gla.taylor} to derive
	\begin{equation}
		\label{eq:Ilaplus.dct.estimate}
		I_{\la,+} = e^{M g_{\la}(0)} \rho \intR e^{-\frac12 M y^2 (1 - \frac13 g'''_{\la}(s y) y)} \chi_{\Omega'_{\la,\rho}}(y) \dd y.
	\end{equation}
	Note that, by \eqref{eq:gla.remainder.estimate.2}, there exists some $C > 0$ such that
	\begin{equation*}
		e^{-\frac12 M y^2 (1 - \frac13 g'''_{\la}(s y) y)} \chi_{\Omega'_{\la,\rho}}(y) \le e^{-\frac12 C M y^2}, \quad y \in \R, \quad \la \ge \la_0.
	\end{equation*}
	Furthermore, $\Upsilon_1(x_{\la}) = o(1)$ as $\la \to +\infty$ by Assumption~\ref{asm:W.airy}~\ref{itm:W.upsilon} and therefore $\delta_{\la} \rho^{-1} \to +\infty$ for $\la \to +\infty$. Noting also \eqref{eq:gla.remainder.estimate.1}, we have for any fixed $y \in \R$
	\begin{equation*}
		\lim_{\la \to +\infty} e^{-\frac12 M y^2 (1 - \frac13 g'''_{\la}(s y) y)} \chi_{\Omega'_{\la,\rho}}(y) = e^{-\frac12 M y^2}.
	\end{equation*}
	Finally, we apply the dominated convergence theorem to \eqref{eq:Ilaplus.dct.estimate} to deduce
	\begin{equation*}
		I_{\la,+} = e^{M g_{\la}(0)} \rho \sqrt{\frac{2 \pi}{M}} (1 + o(1)), \quad \la \to +\infty,
	\end{equation*}
	which yields \eqref{eq:laplace.Ilaplus}.
	
	To prove \eqref{eq:laplace.Ilaminus}, we change variable in the integrand of $I_{\la,-}$ ($x \to -\rho y - x_{\la}$) to obtain (using the fact that $f_{\la}$ is odd and arguing as above)
	\begin{equation*}
		\begin{aligned}
			I_{\la,-} &= e^{-M g_{\la}(0)} \rho \intR e^{\frac12 M y^2 (1 - \frac13 g'''_{\la}(s y) y)} \chi_{\Omega'_{\la,\rho}}(y) \dd y\\
			&\le e^{-M g_{\la}(0)} \rho \intR e^{\frac12 C' M y^2} \chi_{\Omega'_{\la,\rho}}(y) \dd y = 2 e^{-M g_{\la}(0)} \rho \int_{0}^{\delta_{\la} \rho^{-1}} e^{\frac12 C' M y^2} \dd y,
		\end{aligned}
	\end{equation*}
	with some $C' > 0$. It is straightforward to verify that $e^{-x^2} x \int_{0}^{x} e^{y^2} \dd y = \BigO(1)$ as $x \to +\infty$ and hence
	\begin{equation*}
		\begin{aligned}
			I_{\la,-} &\ls e^{-M g_{\la}(0)} \rho \frac{e^{\frac12 C' M \delta_{\la}^2 \rho^{-2}}}{\delta_{\la} \rho^{-1}}\\
			&= (W'(x_{\la}))^{-\frac12} e^{M f_{\la}(x_{\la})} \left(\delta^{-1} \Upsilon_1(x_{\la}) e^{-2 M f_{\la}(x_{\la}) \left(1 - \frac{C' \delta^2 (\Upsilon_1(x_{\la}))^{-2} }{4 f_{\la}(x_{\la})}\right)}\right),
		\end{aligned}
	\end{equation*}
	for $\la \to +\infty$. Recalling \eqref{eq:fla.res.1} and choosing an adequately small value for $\delta > 0$ in \eqref{eq:omegala.def}, it follows that
	\begin{equation*}
		1 - \delta^2 \frac{C' (\Upsilon_1(x_{\la}))^{-2}}{4 f_{\la}(x_{\la})} > 0,\quad \la \to +\infty.
	\end{equation*}
	We have $M > 0$ by assumption and it was shown in Lemma~\ref{lem:fla.prop}~\ref{itm:fla.zeroes} that $f_\la(x_{\la}) > 0$ for large enough $\la$; furthermore, by Assumption~\ref{asm:W.airy}~\ref{itm:W.upsilon}, $\Upsilon_1(x_{\la}) = o(1)$ as $\la \to +\infty$. Hence we conclude that \eqref{eq:laplace.Ilaminus} holds.
\end{proof}

\subsubsection{The upper bound}
\label{sssec:u.bound.proof}
With $\la > 0$ and $f_{\la}$ as in \eqref{eq:fla.def}, we define the non-negative kernel
\begin{equation}
	\label{eq:kla.def}
	k_{\la}(x,y) := 
	\begin{cases}
		e^{f_{\la}(y) - f_{\la}(x)}, &  x < y,\\[1mm]
		0, &  x \ge y,
	\end{cases}
	\quad x, y \in \R,
\end{equation}
and we let $\opT_{k_{\la}}$ be the associated integral operator as in \eqref{eq:Tk.def}. Furthermore, we introduce the weight functions in $\R$
\begin{align}
	\label{eq:A.schur.p} p_{\la}(x) &= 
	\begin{cases}
		e^{-f_{\la}(x)}, &  x \in [-x_{\la} - \delta_{\la}, x_{\la} + \delta_{\la}],\\[1mm]
		e^{-f_{\la}(x_{\la} + \delta_{\la})}, &  x > x_{\la} + \delta_{\la},\\[1mm]
		e^{-f_{\la}(-x_{\la} - \delta_{\la})}, &  x < -x_{\la} - \delta_{\la},
	\end{cases}
	\\
	\label{eq:A.schur.q} q_{\la}(y) &= 
	\begin{cases}
		e^{f_{\la}(y)}, &  y \in [-x_{\la} - \delta_{\la}, x_{\la} + \delta_{\la}],\\[1mm]
		e^{f_{\la}(x_{\la} + \delta_{\la})}, &  y > x_{\la} + \delta_{\la},\\[1mm]
		e^{f_{\la}(-x_{\la} - \delta_{\la})}, &  y < -x_{\la} - \delta_{\la},
	\end{cases}
\end{align}
where $x_{\la}$ and $\delta_{\la}$ are defined in \eqref{eq:W.tp.def} and \eqref{eq:omegala.def}, respectively. Note that the choice of weights is motivated to ensure that the resulting constants $\alpha, \beta$ in Schur's test (see Sub-section~\ref{ssec:schur.prelim}) are optimal, which cannot be achieved with the trivial weights $p_{\la}(x) = q_{\la}(x) = 1$.

\begin{proposition}
	\label{prop:airy.norm.ubound}
	Let $W$ satisfy Assumption~\ref{asm:W.airy} and let $\la > 0$ and $\opT_{k_{\la}}$ be the integral operator associated with the kernel \eqref{eq:kla.def}. Then $\opT_{k_{\la}}$ is a bounded operator on $\Lt(\R)$ and moreover
	\begin{equation}
		\label{eq:A.resnorm.ubound}
		\| \opT_{k_{\la}} \| \le \sqrt{\pi} (W'(x_{\la}))^{-\frac12}\exp \left(2 f_{\la}(x_{\la})\right) (1 + o(1)), \quad \la \to +\infty,
	\end{equation}
	where $x_{\la}$ and $f_{\la}$ are defined as in \eqref{eq:W.tp.def} and \eqref{eq:fla.def}, respectively.
\end{proposition}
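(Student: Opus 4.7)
The natural strategy, given the careful design of the weights $p_\la, q_\la$ in \eqref{eq:A.schur.p}--\eqref{eq:A.schur.q}, is to apply Schur's test \eqref{eq:schur.assm.2}--\eqref{eq:schur.ubound} and show that both admissible constants $\alpha, \beta$ can be taken to satisfy $\alpha, \beta \le \sqrt{\pi}(W'(x_\la))^{-\frac12}\exp(2 f_\la(x_\la))(1+o(1))$. The central estimate driving everything is the value of $\int e^{2 f_\la(y)}\,\dd y$ around the peak of $f_\la$ at $y = x_\la$, which Lemma~\ref{lem:laplace.rplus} delivers, with $M=2$, as exactly the desired right-hand side.

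For the first inequality I would write
\begin{equation*}
\int_\R k_\la(x,y)\, q_\la(y)\,\dd y = e^{-f_\la(x)} \int_x^{\infty} e^{f_\la(y)}\, q_\la(y)\,\dd y
\end{equation*}
and split the $y$-integral according to the three regions defining $q_\la$. In the central region the integrand becomes $e^{2 f_\la(y)}$: Lemma~\ref{lem:laplace.rplus} yields the main term $\sqrt{\pi}(W'(x_\la))^{-\frac12}\exp(2 f_\la(x_\la))(1+o(1))$ from $\Omega'_{\la,+}$, a subleading contribution from $\Omega'_{\la,-}$ by \eqref{eq:laplace.Ilaminus}, and from the middle portion $|y| \le x_\la - \delta_\la$ a contribution bounded, via the pointwise estimate $f_\la(y) \le f_\la(x_\la)(1 - O(\delta^2))$ (cf.\ \eqref{eq:fla.taylor}), by the polynomial factor $x_\la$ times the exponentially small factor $\exp(-W'(x_\la)\delta_\la^2)$. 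For the outer tail $y > x_\la+\delta_\la$ I would integrate by parts using the monotonicity of $-f'_\la(y) = W(y)-\la$:
\begin{equation*}
\int_{x_\la+\delta_\la}^{\infty} e^{f_\la(y)}\,\dd y \le \frac{e^{f_\la(x_\la+\delta_\la)}}{W(x_\la+\delta_\la)-\la},
\end{equation*}
which, combined with the constant $e^{f_\la(x_\la+\delta_\la)}$ from $q_\la$, becomes $e^{2 f_\la(x_\la+\delta_\la)}/(W'(x_\la)\delta_\la)(1+o(1))$ and is again subleading. Dividing through by $p_\la(x)$ in each of the three $x$-regions and using the oddness of $f_\la$ together with the exponential smallness of $e^{-f_\la(x)}$ for $x \ll -x_\la$ (which absorbs the $e^{f_\la(x_\la+\delta_\la)}$ coming from the left piece of $q_\la$) closes the estimate and gives the claimed value of $\alpha$.

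The second Schur inequality is structurally symmetric: it reduces to estimating $\int e^{-2 f_\la(x)} p_\la(x)/q_\la(y)\,\dd x$, whose central integrand is $e^{-2 f_\la(x)} = e^{2 f_\la(-x)}$ by the oddness of $f_\la$, and the substitution $x \mapsto -x$ reduces it to the same Laplace integral, giving $\beta = \sqrt{\pi}(W'(x_\la))^{-\frac12}\exp(2 f_\la(x_\la))(1+o(1))$; the tail and off-peak estimates are handled exactly as above by reflection. Then $\|\opT_{k_\la}\| \le \sqrt{\alpha\beta}$ matches the right-hand side of \eqref{eq:A.resnorm.ubound}. The principal obstacle is the bookkeeping across the nine cases determined by the positions of $x$ and $y$ relative to $\pm(x_\la+\delta_\la)$; in each non-peak case one relies on the fact that $W'(x_\la)\delta_\la^2 = \delta^2 \Upsilon_1(x_\la)^{-2} \to \infty$ by Assumption~\ref{asm:W.airy}~\ref{itm:W.upsilon}, so that the Gaussian gap produced by the Taylor expansion of $f_\la$ near $\pm x_\la$ dominates every polynomial factor in $x_\la$ or $W'(x_\la)$ that appears. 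Choosing $\delta$ small enough for Lemmas~\ref{lem:laplace.rplus} and~\ref{lem:omega.subint} to apply simultaneously ties all the estimates together.
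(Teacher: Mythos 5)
Your framework is the same as the paper's: Schur's test with the weights $p_\la, q_\la$, Lemma~\ref{lem:laplace.rplus} for the peak near $x_\la$, tail estimates elsewhere, and the symmetry $J_\la(y)=I_\la(-y)$, $q_\la(y)=p_\la(-y)$ to handle the second inequality. Where you diverge is the treatment of the off-peak ``middle'' region $|y|\le x_\la-\delta_\la$, and there is a genuine gap there.

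You bound that contribution by the pointwise supremum $\exp\bigl(2f_\la(x_\la)-W'(x_\la)\delta_\la^2(1+o(1))\bigr)$ times the interval length $\sim x_\la$, and then assert that ``the Gaussian gap \dots dominates every polynomial factor in $x_\la$ or $W'(x_\la)$ that appears.'' For this to be subleading relative to the main term $\sqrt{\pi}(W'(x_\la))^{-\frac12}e^{2f_\la(x_\la)}$, you need
\begin{equation}
x_\la \,(W'(x_\la))^{\frac12}\,e^{-\delta^2\Upsilon_1(x_\la)^{-2}(1+o(1))} \longrightarrow 0,
\end{equation}
i.e.\ $\delta^2\Upsilon_1(x_\la)^{-2}\gg \log x_\la + \tfrac12\log W'(x_\la)$. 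Assumption~\ref{asm:W.airy}~\ref{itm:W.upsilon} only guarantees $\Upsilon_1(x_\la)^{-2}\to\infty$; it gives no rate, and the required domination of the logarithmic terms does not follow directly from the hypotheses as stated. The paper sidesteps this entirely: in step \ref{itm:Ila.3} of the proof of Proposition~\ref{prop:airy.norm.ubound}, rather than a crude sup-times-length bound, one multiplies and divides the integrand by $2(\la-W(y))=2f_\la'(y)$, evaluates the resulting exact antiderivative, and bounds the remaining factor by its worst value at $y=x_\la-\delta_\la$; the mean-value theorem and \eqref{eq:W.approx.cte} then yield
\begin{equation}
\int_x^{x_\la-\delta_\la}e^{2f_\la(y)}\,\dd y \le \frac{e^{2f_\la(x_\la-\delta_\la)}}{2(\la-W(x_\la-\delta_\la))} \ls (W'(x_\la))^{-\frac12}e^{2f_\la(x_\la)}\,\delta^{-1}\Upsilon_1(x_\la),
\end{equation}
with no factor of $x_\la$ ever appearing, so that $\Upsilon_1(x_\la)=o(1)$ alone suffices. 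The same trick (multiplying by $W(y)-\la$ and integrating exactly) handles both the right tail you sketched and the left tail $y<-x_\la-\delta_\la$ that you left implicit, again avoiding any dependence on the decay rate of $\Upsilon_1$. You should replace the pointwise bound on the middle and tail regions by this antidifferentiation argument to close the proof under the stated assumptions.
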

\begin{proof}
	Our task is to estimate the integrals
	\begin{equation*}
		\begin{aligned}
			I_{\la}(x) &:= \intR k_{\la}(x,y) q_{\la}(y) \dd y,\\
			J_{\la}(y) &:= \intR k_{\la}(x,y) p_{\la}(x) \dd x,
		\end{aligned}
		 \quad x, y \in \R, \quad \la \to +\infty,
	\end{equation*}
	with $k_{\la}(x,y)$ as in \eqref{eq:kla.def} and $p_{\la}(x), q_{\la}(y)$ as in \eqref{eq:A.schur.p} and \eqref{eq:A.schur.q}, respectively. In what follows, we shall assume $\la \ge \la_0$ with $\la_0$ as in Lemma~\ref{lem:fla.prop}.
	
	We split $\R$ into subintervals and evaluate $I_{\la}(x)$ in each one in turn.
	
	\begin{enumerate}[\upshape (i), wide]
		\item \label{itm:Ila.1} \underline{$x \in [x_{\la} + \delta_{\la}, +\infty)$}
		\begin{equation*}
			\begin{aligned}
				I_{\la}(x) &= e^{-f_{\la}(x) + f_{\la}(x_{\la} + \delta_{\la})} \int_{x}^{+\infty} e^{f_{\la}(y)} \frac{W(y) - \la}{W(y) - \la} \dd y\\
				&\le \frac{e^{-f_{\la}(x) + f_{\la}(x_{\la} + \delta_{\la})}}{W(x_{\la} + \delta_{\la}) - \la} \left[-e^{f_{\la}(y)}\right]_{x}^{+\infty} = \frac{e^{f_{\la}(x_{\la} + \delta_{\la})}}{W(x_{\la} + \delta_{\la}) - \la}\\
				&= \frac{e^{2 f_{\la}(x_{\la})}}{(W'(x_{\la}))^{\frac12}} \left(\frac{(W'(x_{\la}))^{\frac12}}{W(x_{\la} + \delta_{\la}) - \la} e^{-2 (f_{\la}(x_{\la}) - f_{\la}(x_{\la} + \delta_{\la}))}\right) e^{-f_{\la}(x_{\la} + \delta_{\la})}.
			\end{aligned}
		\end{equation*}
		Applying the mean-value theorem and \eqref{eq:W.approx.cte} to $W$ and noting that $f_{\la}(x_{\la}) > f_{\la}(x_{\la} +\delta_{\la})$, we find
		\begin{equation*}
			\frac{(W'(x_{\la}))^{\frac12}}{W(x_{\la} + \delta_{\la}) - \la} e^{-2 (f_{\la}(x_{\la}) - f_{\la}(x_{\la} + \delta_{\la}))} \ls \frac{(W'(x_{\la}))^{\frac12}}{W'(x_{\la}) \delta_{\la}} = \delta^{-1} \Upsilon_1(x_{\la})
		\end{equation*}
		and hence
		\begin{equation*}
			I_{\la}(x) = \sqrt{\pi} (W'(x_{\la}))^{-\frac12} e^{2 f_{\la}(x_{\la})} o(1) p_{\la}(x),\quad \la \to +\infty.
		\end{equation*}
		\item \label{itm:Ila.2} \underline{$x \in [x_{\la} - \delta_{\la}, x_{\la} + \delta_{\la})$}
		\begin{equation*}
			I_{\la}(x) = e^{-f_{\la}(x)} \left(\int_{x}^{x_{\la} + \delta_{\la}} e^{2 f_{\la}(y)} \dd y + \int_{x_{\la} + \delta_{\la}}^{+\infty} e^{f_{\la}(y)} q_{\la}(y) \dd y\right).
		\end{equation*}
		Applying \eqref{eq:laplace.Ilaplus} with $M = 2$, we have for $\la \to +\infty$
		\begin{equation*}
			\int_{x}^{x_{\la} + \delta_{\la}} e^{2 f_{\la}(y)} \dd y \le \int_{\Omega'_{\la,+}} e^{2 f_{\la}(y)} \dd y = \sqrt{\pi} (W'(x_{\la}))^{-\frac12} e^{2 f_{\la}(x_{\la})} (1 + o(1)).
		\end{equation*}
		Moreover using \ref{itm:Ila.1}
		\begin{equation*}
			\begin{aligned}
				\int_{x_{\la} + \delta_{\la}}^{+\infty} e^{f_{\la}(y)} q_{\la}(y) \dd y &= e^{f_{\la}(x_{\la} + \delta_{\la})} I_{\la}(x_{\la} + \delta_{\la})\\
				&= \sqrt{\pi} (W'(x_{\la}))^{-\frac12} e^{2 f_{\la}(x_{\la})} o(1),
			\end{aligned}
		\end{equation*}
		as $\la \to +\infty$. Therefore
		\begin{equation*}
			I_{\la}(x) \le \sqrt{\pi} (W'(x_{\la}))^{-\frac12} e^{2 f_{\la}(x_{\la})} (1 + o(1)) p_{\la}(x), \quad \la \to +\infty.
		\end{equation*}
		\item \label{itm:Ila.3} \underline{$x \in [0, x_{\la} - \delta_{\la})$}
		\begin{equation*}
			I_{\la}(x) = e^{-f_{\la}(x)} \left(\int_{x}^{x_{\la} - \delta_{\la}} e^{2 f_{\la}(y)} \dd y + \int_{x_{\la} - \delta_{\la}}^{+\infty} e^{f_{\la}(y)} q_{\la}(y) \dd y\right).
		\end{equation*}
		The first of the above two integrals can be estimated as in \ref{itm:Ila.1} (noting that $f_{\la}(x_{\la}) > f_{\la}(x_{\la} - \delta_{\la})$)
		\begin{equation*}
			\begin{aligned}
				\int_{x}^{x_{\la} - \delta_{\la}} e^{2 f_{\la}(y)} \dd y &\le \frac{e^{2 f_{\la}(x_{\la} - \delta_{\la})} - e^{2 f_{\la}(x)}}{2(\la - W(x_{\la} - \delta_{\la}))} \le \frac{e^{2 f_{\la}(x_{\la} - \delta_{\la})}}{2(\la - W(x_{\la} - \delta_{\la}))}\\
				&\ls (W'(x_{\la}))^{-\frac12} e^{2 f_{\la}(x_{\la})} \delta^{-1} \Upsilon_1(x_{\la}).
			\end{aligned}
		\end{equation*}
		Moreover using \ref{itm:Ila.2} as $\la \to +\infty$
		\begin{equation*}
			\begin{aligned}
				\int_{x_{\la} - \delta_{\la}}^{+\infty} e^{f_{\la}(y)} q_{\la}(y) \dd y &= e^{f_{\la}(x_{\la} - \delta_{\la})} I_{\la}(x_{\la} - \delta_{\la})\\
				&\le \sqrt{\pi} (W'(x_{\la}))^{-\frac12} e^{2 f_{\la}(x_{\la})} (1 + o(1))
			\end{aligned}
		\end{equation*}
		and therefore
		\begin{equation*}
			I_{\la}(x) \le \sqrt{\pi} (W'(x_{\la}))^{-\frac12} e^{2 f_{\la}(x_{\la})} (1 + o(1)) p_{\la}(x), \quad \la \to +\infty.
		\end{equation*}
		\item \label{itm:Ila.4} \underline{$x \in [-x_{\la} + \delta_{\la}, 0)$}
		\begin{equation*}
			I_{\la}(x) = e^{-f_{\la}(x)} \left(\int_{x}^{0} e^{2 f_{\la}(y)} \dd y + \int_{0}^{+\infty} e^{f_{\la}(y)} q_{\la}(y) \dd y\right).
		\end{equation*}
		Proceeding as in \ref{itm:Ila.3} (recall that $f_{\la}(x) < 0$ for $x \in (-x_{\la,0}, 0) \supset \Omega'_{\la,-}$ and $W$ is even)
		\begin{equation*}
			\int_{x}^{0} e^{2 f_{\la}(y)} \dd y \le \frac1{2(\la - W(x_{\la} - \delta_{\la}))} \ls (W'(x_{\la}))^{-\frac12} e^{2 f_{\la}(x_{\la})} \delta^{-1} \Upsilon_1(x_{\la}).
		\end{equation*}
		Moreover using \ref{itm:Ila.3} as $\la \to +\infty$
		\begin{equation*}
			\begin{aligned}
				\int_{0}^{+\infty} e^{f_{\la}(y)} q_{\la}(y) \dd y &= I_{\la}(0) \le \sqrt{\pi} (W'(x_{\la}))^{-\frac12} e^{2 f_{\la}(x_{\la})} (1 + o(1))
			\end{aligned}
		\end{equation*}
		and therefore
		\begin{equation*}
			I_{\la}(x) \le \sqrt{\pi} (W'(x_{\la}))^{-\frac12} e^{2 f_{\la}(x_{\la})} (1 + o(1)) p_{\la}(x), \quad \la \to +\infty.
		\end{equation*}
		\item \label{itm:Ila.5} \underline{$x \in [-x_{\la} - \delta_{\la}, -x_{\la} + \delta_{\la})$}
		\begin{equation*}
			I_{\la}(x) = e^{-f_{\la}(x)} \left(\int_{x}^{-x_{\la} + \delta_{\la}} e^{2 f_{\la}(y)} \dd y + \int_{-x_{\la} + \delta_{\la}}^{+\infty} e^{f_{\la}(y)} q_{\la}(y) \dd y\right).
		\end{equation*}
		Applyinq \eqref{eq:laplace.Ilaminus} with $M = 2$, we have for $\la \to +\infty$
		\begin{equation*}
			\int_{x}^{-x_{\la} + \delta_{\la}} e^{2 f_{\la}(y)} \dd y \le \int_{\Omega'_{\la,-}} e^{2 f_{\la}(y)} \dd y = \sqrt{\pi} (W'(x_{\la}))^{-\frac12} e^{2 f_{\la}(x_{\la})} o(1).
		\end{equation*}
		Moreover using \ref{itm:Ila.4}
		\begin{equation*}
			\begin{aligned}
				\int_{-x_{\la} + \delta_{\la}}^{+\infty} e^{f_{\la}(y)} q_{\la}(y) \dd y &= e^{f_{\la}(-x_{\la} + \delta_{\la})} I_{\la}(-x_{\la} + \delta_{\la})\\
				&\le \sqrt{\pi} (W'(x_{\la}))^{-\frac12} e^{2 f_{\la}(x_{\la})} (1 + o(1)),
			\end{aligned}
		\end{equation*}
		as $\la \to +\infty$. Therefore
		\begin{equation*}
			I_{\la}(x) \le \sqrt{\pi} (W'(x_{\la}))^{-\frac12} e^{2 f_{\la}(x_{\la})} (1 + o(1)) p_{\la}(x), \quad \la \to +\infty.
		\end{equation*}
		\item \label{itm:Ila.6} \underline{$x \in (-\infty, -x_{\la} - \delta_{\la})$}
		\begin{equation*}
			I_{\la}(x) = e^{-f_{\la}(x)} \left(\int_{x}^{-x_{\la} - \delta_{\la}} e^{f_{\la}(y) + f_{\la}(-x_{\la} - \delta_{\la})} \dd y + \int_{-x_{\la} - \delta_{\la}}^{+\infty} e^{f_{\la}(y)} q_{\la}(y) \dd y\right).
		\end{equation*}
		Arguing as in \ref{itm:Ila.1} and using the facts that $W$ is even, $f_{\la}$ is odd and $f_{\la}(x) \ge 0$ in $[x_{\la}, x_{\la} + \delta_{\la}]$, we find
		\begin{equation*}
			\begin{aligned}
				\int_{x}^{-x_{\la} - \delta_{\la}} e^{f_{\la}(y) - f_{\la}(x_{\la} + \delta_{\la})} \dd y &\le \frac{e^{f_{\la}(x)}}{W(x_{\la} + \delta_{\la}) - \la}\\
				&\ls e^{f_{\la}(x)} \frac{\delta^{-1} \Upsilon_1(x_{\la})}{(W'(x_{\la}))^{\frac12}} e^{2 f_{\la}(x_{\la}) + f_{\la}(x_{\la} + \delta_{\la})}.
			\end{aligned}
		\end{equation*}
		Moreover using \ref{itm:Ila.5}
		\begin{equation*}
			\begin{aligned}
				\int_{-x_{\la} - \delta_{\la}}^{+\infty} e^{f_{\la}(y)} q_{\la}(y) \dd y &= e^{-f_{\la}(x_{\la} + \delta_{\la})} I_{\la}(-x_{\la} - \delta_{\la})\\
				&\le \sqrt{\pi} (W'(x_{\la}))^{-\frac12} e^{2 f_{\la}(x_{\la})} (1 + o(1)),
			\end{aligned}
		\end{equation*}
		as $\la \to +\infty$. Therefore (note that $f_{\la}(x) > f_{\la}(-x_{\la} - \delta_{\la})$)
		\begin{equation*}
			I_{\la}(x) \le \sqrt{\pi} (W'(x_{\la}))^{-\frac12} e^{2 f_{\la}(x_{\la})} (1 + o(1)) p_{\la}(x), \quad \la \to +\infty.
		\end{equation*}
	\end{enumerate}
	
	We have therefore shown that for any $x \in \R$
	\begin{equation}
		\label{eq:Ila.ubound}
		I_{\la}(x) \le \sqrt{\pi} (W'(x_{\la}))^{-\frac12} e^{2 f_{\la}(x_{\la})} (1 + o(1)) p_{\la}(x), \quad \la \to +\infty.
	\end{equation}
	We can also prove that for any $y \in \R$
	\begin{equation}
		\label{eq:Jla.ubound}
		J_{\la}(y) \le \sqrt{\pi} (W'(x_{\la}))^{-\frac12} e^{2 f_{\la}(x_{\la})} (1 + o(1)) q_{\la}(y), \quad \la \to +\infty,
	\end{equation}
	by either repeating the arguments used for $I_{\la}$ or simply noting that, for any $y \in \R$, we have $q_{\la}(y) = p_{\la}(-y)$ and $J_{\la}(y) = I_{\la}(-y)$, and applying \eqref{eq:Ila.ubound}.
	
	Finally, from \eqref{eq:Ila.ubound}-\eqref{eq:Jla.ubound}, an appeal to Schur's test (refer to \eqref{eq:schur.assm.2}-\eqref{eq:schur.ubound}) yields \eqref{eq:A.resnorm.ubound}.	
\end{proof}

\subsubsection{The lower bound}
\label{sssec:l.bound.proof}
Our next result shows that \eqref{eq:A.resnorm.ubound} is optimal.

\begin{proposition}
	\label{prop:airy.norm.lbound}
	Let $W$ satisfy Assumption~\ref{asm:W.airy}, and let $\la > 0$ and $\opT_{k_{\la}}$ be the integral operator associated with the kernel \eqref{eq:kla.def}. Then
	\begin{equation}
		\label{eq:A.resnorm.lbound}
		\| \opT_{k_{\la}} \| \ge \sqrt{\pi} (W'(x_{\la}))^{-\frac12} \exp \left(2 f_{\la}(x_{\la})\right) (1 + o(1)), \quad \la \to +\infty,
	\end{equation}
	where $x_{\la}$ and $f_{\la}$ are defined as in \eqref{eq:W.tp.def} and \eqref{eq:fla.def}, respectively.
\end{proposition}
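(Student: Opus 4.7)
The plan is to establish optimality of the upper bound \eqref{eq:A.resnorm.ubound} by testing $\opT_{k_\la}$ on a carefully chosen ``quasi-mode'' concentrated near the turning point $x_\la$. The natural candidate, which diagonalises the weighted structure of the kernel \eqref{eq:kla.def}, is
\[
u_\la(y) := e^{f_\la(y)} \chi_{\Omega'_{\la,+}}(y),
\]
where $\Omega'_{\la,+}$ is the neighbourhood of $x_\la$ defined in \eqref{eq:omegala.def}. With this choice the normalisation is computed exactly: $\|u_\la\|^2 = \int_{\Omega'_{\la,+}} e^{2 f_\la(y)} \dd y = I_{\la,+}$ (taking $M = 2$), and Lemma~\ref{lem:laplace.rplus} then gives $\|u_\la\|^2 = \sqrt{\pi}(W'(x_\la))^{-\frac12} e^{2 f_\la(x_\la)}(1+o(1))$.

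The key observation is that for every $x \le x_\la - \delta_\la$ the entire support of $u_\la$ lies to the right of $x$, so the definition of $k_\la$ yields
\[
(\opT_{k_\la} u_\la)(x) = e^{-f_\la(x)} \int_{\Omega'_{\la,+}} e^{2 f_\la(y)} \dd y = I_{\la,+} \, e^{-f_\la(x)}, \quad x \le x_\la - \delta_\la.
\]
Restricting the $L^2$ norm of $\opT_{k_\la} u_\la$ to this half-line and then exploiting the oddness of $f_\la$ via the substitution $s = -x$, which converts $e^{-2 f_\la(x)}$ into $e^{2 f_\la(s)}$, we obtain
\[
\|\opT_{k_\la} u_\la\|^2 \ge I_{\la,+}^2 \int_{-\infty}^{x_\la - \delta_\la} e^{-2 f_\la(x)} \dd x = I_{\la,+}^2 \int_{-x_\la + \delta_\la}^{+\infty} e^{2 f_\la(s)} \dd s \ge I_{\la,+}^3,
\]
where the last inequality follows from the containment $\Omega'_{\la,+} \subset (-x_\la + \delta_\la, +\infty)$, which holds for all sufficiently large $\la$ by \eqref{eq:xlapm.unbounded} and the fact that $\delta_\la \to 0$.

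Combining these bounds yields $\|\opT_{k_\la}\| \ge \|\opT_{k_\la} u_\la\|/\|u_\la\| \ge I_{\la,+}$, which is precisely the right-hand side of \eqref{eq:A.resnorm.lbound} by Lemma~\ref{lem:laplace.rplus}. No genuinely hard step appears: the delicate asymptotic analysis was already invested in Lemma~\ref{lem:laplace.rplus} to pin down the leading-order value of $I_{\la,+}$, and the symmetric choice of test function together with the oddness of $f_\la$ collapses the verification of optimality into a short algebraic calculation that matches the upper bound exactly, without requiring any additional Laplace-type concentration estimates on the ``outer'' region $x \approx -x_\la$. The only subtlety I anticipate is ensuring that the ``throwaway'' contributions to $\|\opT_{k_\la} u_\la\|^2$ from $x > x_\la - \delta_\la$ and from $s \in (-x_\la + \delta_\la, x_\la - \delta_\la)$ are indeed negligible, but as the inequalities above already bypass them entirely, no further work is needed.
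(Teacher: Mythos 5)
Your argument is correct and coincides with the paper's proof: both take the test function $e^{f_\la}\chi_{\Omega'_{\la,+}}$, use Lemma~\ref{lem:laplace.rplus} (with $M=2$) for its norm, and exploit the oddness of $f_\la$ to reduce $\|\opT_{k_\la}u_\la\|^2$ to (at least) $I_{\la,+}^3$, the only cosmetic difference being that the paper integrates over $\Omega'_{\la,-}$ exactly rather than over $(-\infty,x_\la-\delta_\la]$. One small misstatement in your justification: $\delta_\la = \delta x_\la^{-\nu}$ does \emph{not} tend to $0$ when $\nu\le 0$ (it is constant for $\nu=0$ and diverges for $\nu<0$); what you actually need, and what does hold, is $\delta_\la < x_\la$, which follows from $\delta<\tfrac14$ and $\nu\ge -1$ once $x_\la\ge1$.
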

\begin{proof}
	For sufficiently large $\la > 0$ (see Lemma~\ref{lem:fla.prop}) and with $\Omega'_{\la,+}$ as defined in \eqref{eq:omegala.def}, let
	\begin{equation}
		\label{eq:vla.def}
		v_{\la}(y) := 
		\begin{cases}
			e^{f_{\la}(y)}, &  y \in \Omega'_{\la,+},\\[1mm]
			0, &  \text{otherwise}.
		\end{cases}
	\end{equation}
	Then by \eqref{eq:laplace.Ilaplus} with $M = 2$
	\begin{equation}
		\label{eq:vla.norm}
		\| v_{\la} \|^2 = \int_{\Omega'_{\la,+}} e^{2 f_{\la}(y)} \dd y = \sqrt{\pi} (W'(x_{\la}))^{-\frac12} e^{2 f_{\la}(x_{\la})} (1 + o(1)), \quad \la \to +\infty.
	\end{equation}

	Let for any $x \in \R$ and sufficiently large $\la > 0$
	\begin{equation*}
	u_{\la}(x) := \opT_{k_{\la}} v_{\la}(x) = \intR k_{\la}(x,y) v_{\la}(y) \dd y = e^{-f_{\la}(x)} \int_{x}^{+\infty} e^{2 f_{\la}(y)} \chi_{\Omega'_{\la,+}}(y) \dd y,
	\end{equation*}
	where for the last equality we have used the definitions \eqref{eq:kla.def} and \eqref{eq:vla.def}. Then, for any $x \in \Omega'_{\la,-}$, we have $u_{\la}(x) = \exp(-f_{\la}(x)) \| v_{\la} \|^2$ and, using the fact that $f_{\la}$ is odd and \eqref{eq:vla.norm}, we find
	\begin{equation*}
		\| u_{\la} \|^2 \ge \int_{\Omega'_{\la,-}} |u_{\la}(x)|^2 \dd x = \| v_{\la} \|^4 \int_{\Omega'_{\la,-}} e^{-2 f_{\la}(x)} \dd x = \| v_{\la} \|^6.
	\end{equation*}
	Hence by \eqref{eq:vla.norm}
	\begin{equation*}
		\frac{\| u_{\la} \|}{\| v_{\la} \|} \ge \| v_{\la} \|^2 = \sqrt{\pi} (W'(x_{\la}))^{-\frac12} e^{2 f_{\la}(x_{\la})} (1 + o(1)), \quad \la \to +\infty,
	\end{equation*}
	and \eqref{eq:A.resnorm.lbound} follows.
\end{proof}

\subsubsection{The theorem's proof}
\label{sssec:airy.thm.proof}

\begin{proof}[Proof of Theorem~\ref{thm:airy.norm}]
	With $\opT_{k_{\la}}$ determined by the kernel \eqref{eq:kla.def} and appealing to Propositions~\ref{prop:airy.norm.ubound}, \ref{prop:airy.norm.lbound}, it is enough to prove that the equality $(\opA - \la)^{-1} = \opT_{k_{\la}}$ holds on $\CcR$, a dense subset of $\Lt(\R)$, for $\la \to +\infty$.
	
	Let $v \in \CcR$; since, for any fixed and sufficiently large $\la > 0$, we have $\opT_{k_{\la}} \in \sL(\Lt(\R))$, the function $u_{\la}(x) := \opT_{k_{\la}} v(x)$, $x \in \R$, belongs to $\Lt(\R)$ and moreover, with arbitrary $f \in \CcR$, elementary calculations show that
	\begin{equation*}
		\langle \Nt u_{\la} + W u_{\la}, f \rangle = \langle \la u_{\la} + v, f \rangle.
	\end{equation*}
	It follows that $\Nt u_{\la} + W u_{\la} \in \Lt(\R)$, \ie~$u_{\la} \in \Dom(\opA)$, and $\opA u_{\la} = \la u_{\la} + v$.
	
	Furthermore, by \eqref{eq:fla.deriv}, we have for any $x \in \R$ and $\la > 0$
	\begin{equation*}
		\begin{aligned}
			(\opT_{k_{\la}} (\opA - \la) v)(x) &= \int_{x}^{+\infty} e^{f_{\la}(y) - f_{\la}(x)} (-v'(y) + (W(y) - \la)v(y)) \dd y\\
			&= - e^{-f_{\la}(x)} \left[e^{f_{\la}(y)} v(y)\right]_{x}^{+\infty} = v(x),
		\end{aligned}
	\end{equation*}
	which concludes the proof.
\end{proof}

\section{The norm of the semigroup}
\label{sec:sg.norm}
In this section, we provide an estimate of the norm of the semigroup generated by $-\opA$. We refer the reader to \cite[Ex.~6.1.20, Ex.~10.2.9]{Davies-2007} for instances of semigroups that follow formula \eqref{eq:A.sg.formula} (although with different underlying spaces: $C_0(\R)$ and $\Lt(\Rplus)$, respectively).

\begin{theorem}
	\label{thm:airy.sg.norm}
	Let $W$ satisfy Assumption~\ref{asm:W.airy} and let $\opA$ be the generalised Airy operator \eqref{eq:airy.op.def} in $\Lt(\R)$. Then $-\opA$ generates a $C_0$-semigroup of contractions $(\opS_t)_{t \ge 0}$ on $\Lt(\R)$. Moreover, for any $t \ge 0$ and any $f \in \Lt(\R)$, we have
	\begin{equation}
		\label{eq:A.sg.formula}
		(\opS_t f)(x) = \exp (g_t(x)) f(x+t), \quad g_t(x) := -\int_{x}^{x+t} W(s) \dd s, \quad x \in \R,
	\end{equation}
	and there exists $t_0 \ge 0$ such that
	\begin{equation}
		\label{eq:A.sg.norm}
		\| \opS_t \| = \exp \left( -2 \int_{0}^{\frac{t}{2}} W(s) \dd s \right), \quad t \ge t_0.
	\end{equation}
\end{theorem}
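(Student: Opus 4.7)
The strategy splits into three stages: generation, derivation of an explicit formula, and computation of the norm. Since Proposition~\ref{prop:airy.basic.properties}~\ref{itm:A.maccr} already establishes that $\opA$ is densely defined and m-accretive, the Lumer--Phillips theorem immediately yields that $-\opA$ generates a $C_0$-semigroup of contractions $(\opS_t)_{t \ge 0}$ on $\Lt(\R)$; this settles the first claim.

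For the explicit formula I would introduce the candidate family $\wt\opS_t$ defined by the right-hand side of \eqref{eq:A.sg.formula}, show that it is itself a $C_0$-semigroup of contractions, and identify its generator with $-\opA$. The semigroup property $\wt\opS_{t+s} = \wt\opS_t \wt\opS_s$ follows from the additivity identity $g_{t+s}(x) = g_t(x) + g_s(x+t)$, a direct consequence of splitting the integral defining $g_t$; the initial condition is immediate from $g_0 \equiv 0$. Contractivity comes from $W \ge 0$, whence $g_t \le 0$ and $|\exp g_t| \le 1$, combined with the fact that $f \mapsto f(\cdot + t)$ is an $\Lt$-isometry. Strong continuity at $t=0$ follows by a density argument from dominated convergence on $\CcR$. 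To identify the generator I would differentiate under the integral to obtain, for every $f \in \CcR$, the limit $\lim_{t \to 0^+} t^{-1}(\wt\opS_t f - f) = \partial_x f - W f = -\opA f$ in $\Lt(\R)$. Since $\CcR$ is a core for $\opA$ (by mollification and cut-off, exploiting the domain separation \eqref{eq:A.dom.sep} and the graph-norm bound \eqref{eq:genAiry.graph.norm}), and since both $-\opA$ and the generator of $\wt\opS_t$ are m-dissipative, they must coincide, whence $\wt\opS_t = \opS_t$ for all $t \ge 0$.

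For the norm I would factor $\opS_t = M_{h_t} \opT_t$, where $M_{h_t}$ denotes multiplication by $h_t := \exp g_t \in \LiR$ and $\opT_t f(x) := f(x+t)$ is an $\Lt$-isometry. This gives
\begin{equation*}
	\| \opS_t \| = \| h_t \|_{\infty} = \exp \bigl( \sup_{x \in \R} g_t(x) \bigr),
\end{equation*}
so the task reduces to minimising $F_t(x) := \int_{x}^{x+t} W(s) \dd s$ over $x \in \R$. The change of variable $s \mapsto -s$ combined with the evenness of $W$ yields $F_t(-x - t) = F_t(x)$, so $F_t$ is symmetric about $x^* = -t/2$, which is therefore a critical point. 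Since $F_t'(x) = W(x+t) - W(x)$, I would show $F_t' \ge 0$ on $[-t/2, +\infty)$ for all sufficiently large $t$ by splitting this half-line into $[x_0, +\infty)$, $[-x_0, x_0]$ and $[-t/2, -x_0]$ (assuming $t > 2 x_0$) and using, respectively: strict monotonicity of $W$ on $(x_0, +\infty)$; the fact that $W(x+t) \to +\infty$ while $W(x)$ stays bounded by $\| W \chi_{[-x_0, x_0]} \|_{\infty}$; and the inequality $|x+t| \ge |x|$ with both $|x+t|, |x| > x_0$, combined with evenness and monotonicity. Symmetry then handles $(-\infty, -t/2]$, so $x^* = -t/2$ is the global minimiser of $F_t$ for $t \ge t_0$. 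Evenness once more gives $F_t(-t/2) = 2 \int_{0}^{t/2} W(s) \dd s$, from which \eqref{eq:A.sg.norm} follows.

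The main technical obstacle is the global minimisation in the third stage: the transitional region $[-x_0, x_0]$, where $W$ is not assumed monotone, must be isolated, and the threshold $t_0$ has to be chosen large enough (in terms of $x_0$ and $\| W \chi_{[-x_0, x_0]} \|_{\infty}$) to guarantee $W(x+t) > W(x)$ as $x$ crosses this region. The identification of the generator in the second stage is otherwise routine but relies on $\CcR$ being a core for $\opA$, which in turn rests on the domain separation \eqref{eq:A.dom.sep}.
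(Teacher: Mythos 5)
Your proposal is correct and follows the same overall three-stage strategy as the paper (Lumer--Phillips, identification of the explicit semigroup $\widetilde\opS_t$ with $\opS_t$, maximisation of $g_t$), but the middle and final stages are argued by genuinely different routes.

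For the generator identification, the paper shows the reverse inclusion $\opZ \subset -\opA$: it takes $f \in \Dom(\opZ)$, passes to the adjoint semigroup $\widetilde\opS_t^*$ in the weak pairing $\langle \opZ f, \phi\rangle = \lim_{t\to 0^+}\langle f, t^{-1}(\widetilde\opS_t^*\phi-\phi)\rangle$ for $\phi\in\CcR$, and concludes $f \in \Dom(-\opA)$ with $\opZ f = -\opA f$; equality then follows from $1\in\rho(\opZ)\cap\rho(-\opA)$. You instead establish the forward inclusion $-\opA\restriction_{\CcR}\subset\opZ$ by computing the $\Lt$-limit $t^{-1}(\widetilde\opS_t f - f) \to f' - Wf$ for $f\in\CcR$ directly, and then invoke the core property of $\CcR$ together with maximal dissipativity. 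This works, but it shifts the burden onto proving that $\CcR$ is a core for $\opA$ in the graph norm, which the paper does not record anywhere and which your proposal only sketches (truncation plus mollification, valid because $W\in\LilocR$ and using \eqref{eq:genAiry.graph.norm}). The paper's adjoint route bypasses the core question entirely, which is presumably why the authors chose it; your route is slightly more direct conceptually but needs that extra lemma spelled out.

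For the norm, your factorisation $\opS_t = M_{h_t}\opT_t$ with $\opT_t$ unitary is the cleanest way to see $\|\opS_t\| = \|h_t\|_\infty = \exp(\sup_x g_t(x))$. Your subsequent minimisation of $F_t(x)=\int_x^{x+t}W$ via the reflection identity $F_t(-x-t) = F_t(x)$ (evenness of $W$) is a neat symmetrisation that reduces the problem to proving $F_t'\ge 0$ on $[-t/2,+\infty)$, which you handle correctly by splitting the half-line at $\pm x_0$. The paper instead does a full case analysis over five regions for a putative critical point $\hat x$; the two arguments cover the same ground, but yours exploits the symmetry more systematically and is shorter.
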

\begin{proof}
	Since, by Proposition~\ref{prop:airy.basic.properties}, $-\opA$ is densely-defined, m-dissipative and $\rho(-\opA) \cap (0, +\infty) \ne \emptyset$, the first part of the claim follows immediately from the Lumer-Phillips theorem (see \eg~\cite[Thm.~II.3.15]{Engel-Nagel-book}). Moreover, for any $u_0 \in \Dom(\opA)$, the function $u(\cdot) := \opS_{\cdot} u_0$ solves the homogenous Cauchy problem
	\begin{equation}
		\label{eq:airy.cauchy.problem}
		\left\{
		\begin{aligned}
			u'(t) &= -\opA u(t),\quad t \ge 0,\\
			u(0) &= u_0,
		\end{aligned}
		\right.
	\end{equation}
	(see \cite[Lem.~II.1.3]{Engel-Nagel-book}).
	
	Observing that the function $\Psi(t,x) = \exp(g_t(x)) \Psi_0(x+t)$, with $g_t(x)$ as in \eqref{eq:A.sg.formula}, solves the partial differential equation $\Ntime\Psi(t,x) = \Ntp\Psi(t,x) - W(x) \Psi(t,x)$ with initial condition $\Psi_0 \in \WotR \cap \Dom(W)$, it can be readily shown that, for every $t \ge 0$, the operator $\widetilde\opS_t$ defined for any $f \in \Lt(\R)$ by
	\begin{equation*}
		(\widetilde\opS_t f)(x) := \exp (g_t(x)) f(x+t), \quad x \in \R,
	\end{equation*}
	with $g_t$ as in \eqref{eq:A.sg.formula}, is bounded with $\| \widetilde\opS_t\| \le 1$ and it satisfies the semigroup property (see (i)-(ii) in \cite[p.~167]{Davies-2007}). Furthermore it can be readily verified that the function $t \mapsto \widetilde\opS_t f$ is continuous at $t = 0$ for each $f \in \Lt(\R)$ and hence $(\widetilde\opS_t)_{t \ge 0}$ is a contraction semigroup. Our next step is to show that the generator of $(\widetilde\opS_t)_{t \ge 0}$, which we shall denote by $(\opZ, \Dom(\opZ))$, is in fact  $-\opA$. We claim that it is enough to prove $\opZ \subset -\opA$. Note that, since $(\widetilde\opS_t)_{t \ge 0}$ is a contraction semigroup, it follows that $1 \in \rho(\opZ)$ (see \cite[Thm.~II.1.10~(ii)]{Engel-Nagel-book}) and hence $1 \in \rho(\opZ) \cap \rho(-\opA) \ne \emptyset$ (recall that $\sigma(-\opA) = \emptyset$ by Proposition~\ref{prop:airy.basic.properties}). Therefore $\opZ \subset -\opA$ \textit{implies} $\opZ = -\opA$ (see \cite[Ex.~IV.1.21~(5)]{Engel-Nagel-book}).
	
	Let us take $f \in \Dom(\opZ)$, with $g := \opZ f \in \Lt(\R)$, and let $\phi \in \CcR$ be a test function. Then
	\begin{equation}
		\label{eq:A.sg.gen.weak.1}
		\langle g, \phi \rangle = \lim_{t \to 0^{+}} \langle t^{-1} (\widetilde\opS_t f - f), \phi \rangle = \lim_{t \to 0^{+}} \langle f, t^{-1} (\widetilde\opS_t^* \phi - \phi) \rangle,
	\end{equation}	
	where $(\widetilde\opS_t^*)_{t \ge 0}$ denotes the adjoint semigroup. It is not difficult to verify that its action on $\Lt(\R)$ is given by
	\begin{equation*}
		(\widetilde\opS_t^* f)(x) = \exp(g_t(-x)) f(x - t), \quad x \in \R.
	\end{equation*}
	Assuming that $\supp(\phi) \subset [a, b]$ (with $a, b \in \R$, $a < b$) and $0 < t < \delta$ (with arbitrarily small $\delta > 0$), we have
	\begin{equation}
		\label{eq:A.sg.gen.weak.2}
		\begin{aligned}
			\langle f, t^{-1} (\widetilde\opS_t^* \phi - \phi) \rangle &= \int_{a}^{b} f(x) \exp(g_t(-x)) \frac{\overline{\phi(x - t) - \phi(x)}}{t} \dd x\\
			&\quad + \int_{a}^{b} f(x) \frac{\exp(g_t(-x)) - 1}{t} \overline{\phi(x)} \dd x.
		\end{aligned}
	\end{equation}
	Noting that $g_t(\cdot)$ is continuous and $\phi(\cdot)$ is continuously differentiable, an application of the dominated convergence theorem yields
	\begin{equation*}
		\lim_{t \to 0^{+}} \int_{a}^{b} f(x) \exp(g_t(-x)) \frac{\overline{\phi(x - t) - \phi(x)}}{t} \dd x = -\int_{a}^{b} f(x) \overline{\phi'(x)} \dd x = \langle f, -\phi' \rangle.
	\end{equation*}
	Observe furthermore that, applying the fundamental theorem of calculus for integrable functions (see \eg~\cite[Sec.~11.6]{Titchmarsh1939theory}) and the dominated convergence theorem to the second term in the right-hand side of \eqref{eq:A.sg.gen.weak.2}, we find
	\begin{equation*}
		\lim_{t \to 0^{+}} \int_{a}^{b} f(x) \frac{\exp(g_t(-x)) - 1}{t} \overline{\phi(x)} \dd x = -\int_{a}^{b} f(x) W(x) \overline{\phi(x)} \dd x = \langle f, -W \phi \rangle.
	\end{equation*}
	Returning to \eqref{eq:A.sg.gen.weak.1} with these results, we have shown
	\begin{equation*}
		\langle g, \phi \rangle = \langle f, -\phi' - W \phi \rangle,
	\end{equation*}
	with arbitrary $\phi \in \CcR$, which proves that $f \in \Dom(-\opA)$ and $\opZ f = g = -\opA f$, as claimed. This concludes the proof of the statement that $-\opA$ generates $(\widetilde\opS_t)_{t \ge 0}$. Since a $C_0$-semigroup is uniquely determined by its generator (see \cite[Thm.~6.1.16]{Davies-2007}), it follows that $\widetilde\opS_t = \opS_t$ for all $t \ge 0$.
	
	It remains to determine $\| \opS_t \|$. Note that, for every $t \ge 0$, we have that $g_t$ is differentiable a.e. in $\R$ and
	\begin{equation*}
		g_t'(x) = W(x) - W(x+t).
	\end{equation*}
	By Assumption~\ref{asm:W.airy}~\ref{itm:W.incr.unbd}, we can find $x_1 > x_0$ such that $W(x_1) > \| W \chi_{[-x_0, x_0]} \|_{\infty}$. Choosing $t_0 \ge 2 x_1$ and recalling Assumption~\ref{asm:W.airy}~\ref{itm:W.even}, it is clear that, for any (fixed) $t \ge t_0$, the value $x = -t/2$ solves the equation $g_t'(x) = 0$. We shall show that this solution is unique (the idea of the proof may be best illustrated with a picture: see Fig.~\ref{fig:W_Wtrans.graph}). For this purpose, assume that there exists $\hat{x} \in \R$ such that $g_t'(\hat{x}) = 0$. If $\hat{x} \in [0, x_0]$, then
	\begin{equation*}
		W(\hat{x}) \le \| W \chi_{[-x_0, x_0]} \|_{\infty} < W(x_1) < W(t) \le W(\hat{x} + t)
	\end{equation*}
	which contradicts the fact that $\hat{x}$ is a solution. If $\hat{x} \in (x_0, +\infty)$, it is not possible to have $W(\hat{x}) = W(\hat{x} + t)$ since $W(x)$ is strictly increasing in $(x_0, +\infty)$ and $t > 0$. It therefore follows that $\hat{x} \notin [0, +\infty)$. If $\hat{x} \le -t \le -2 x_1 < -2 x_0$ and $\hat{x} + t < -x_0$, then $\hat{x}$ cannot be a solution because $W(x)$ is strictly decreasing in $(-\infty, -x_0)$; on the other hand, if $\hat{x} \le -t \le -2 x_1 < -x_1$ and $\hat{x} + t \in [-x_0, 0]$, we have
	\begin{equation*}
		W(\hat{x}) > \| W \chi_{[-x_0, x_0]} \|_{\infty} \ge W(\hat{x} + t)
	\end{equation*}
	and therefore $\hat{x}$ cannot be a solution in this case either. This shows that $\hat{x} \notin (-\infty, -t]$ which leaves us with $(-t, 0)$ as the only possible solution domain. Assuming that $\hat{x} \in (-t, -t/2)$ and using Assumptions~\ref{asm:W.airy}~\ref{itm:W.even},~\ref{itm:W.incr.unbd}, we have $W(\hat{x}) > W(t/2)$ and $W(\hat{x} + t) < W(t/2)$, which once again excludes $\hat{x}$ being a solution. Finally, if $\hat{x} \in (-t/2, 0)$, then $W(t/2) > W(\hat{x})$ and $W(\hat{x} + t) > W(t/2)$, contradicting the fact that $\hat{x}$ is a solution and completing the proof of the uniqueness of $x = -t/2$ as a solution to $g_t'(x) = 0$. Since $\lim_{|x| \to +\infty} W(x) = +\infty$ by Assumption~\ref{asm:W.airy}~\ref{itm:W.even}-\ref{itm:W.incr.unbd}, it follows that $\lim_{|x| \to +\infty} g_t(x) = -\infty$ for any fixed $t \ge t_0$. Consequently we can find $x_2(t) > t/2$ such that $g_t(x) < g_t(-t/2)$ for all $|x| \ge x_2(t)$. But $g_t$ is continuous and therefore $x = -t/2$ is its maximum in $[-x_2(t), x_2(t)]$ and indeed in $\R$. Hence $\sup_{x \in \R} g_t(x) = g_t(-t/2)$ for every $t \ge t_0$ and we conclude that $\| \opS_t \| = \exp (g_t(-t/2))$, which proves \eqref{eq:A.sg.norm}.
	\begin{figure}[h]
		\includegraphics[scale=0.95]{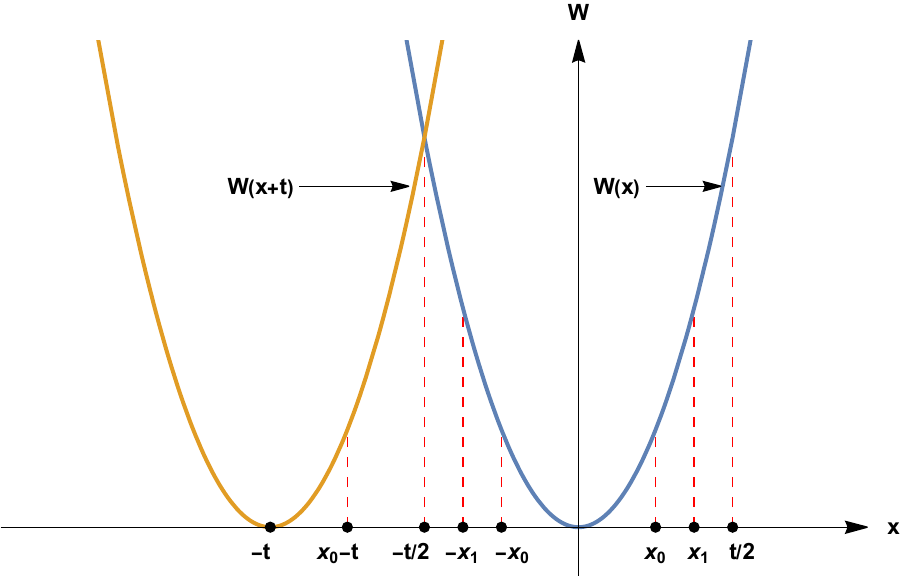}
		\caption{Sketch of the solution to $g_t'(x) = W(x) - W(x+t) = 0$ with fixed $t \ge t_0$.}
		\label{fig:W_Wtrans.graph}
	\end{figure}
\end{proof}

\begin{remark}
	If $W$ is sufficiently regular around $0$, \eg~$W \in C^2(\R)$ and $W'(x) > 0$, $x > 0$, then we can take $t_0 = 0$ in the statement of Theorem~\ref{thm:airy.sg.norm}.
\end{remark}

\begin{example}
	\label{ex:W.sg}
	We estimate the semigroup norm for the instances in Example~\ref{ex:W.basic} (refer also to Example~\ref{ex:W.resolvent}).
	\begin{enumerate}[\upshape (i), wide]
		\item For $W(x) = \log \langle x \rangle^{p}$, $p > 0$, we find $\int_{0}^{t/2} W(s) \dd s = W(t/2) t / 2 - p t / 2 + p \arctan(t/2)$ and hence
		\begin{equation*}
			\| \opS_t \| = \langle t/2 \rangle^{- p t} \exp \left(p (t - \pi)\right) (1 + o(1)), \quad t \to +\infty.
		\end{equation*}
		\item For $W(x) = |x|^{p}$, $p > 0$, we have $\int_{0}^{t/2} W(s) \dd s = (2^{-(p + 1)} / (p + 1)) t^{p + 1}$ and hence
		\begin{equation*}
			\| \opS_t \| = \exp \left(-\frac{2^{-p}}{p + 1} t^{p + 1}\right), \quad t \ge 0.
		\end{equation*}
		In the particular case of the complex Airy operator (\ie~$p = 2$), this yields $\| \opS_t \| = \exp(-t^{3} / 12)$, as expected (see \cite[Eq.~14.3.9]{Helffer-2013-book}).
		\item For the fast-growing function $W(x) = e^{|x|^{p}}$, $p > 0$, we have $\int_{0}^{t/2} W(s) \dd s = e^{(t/2)^{p}} F_p(t/2)$, where $F_p(x)$ represents the generalised Dawson's integral (see Example~\ref{ex:W.resolvent}~\ref{itm:W.exp.resolvent}). This results in the estimate
		\begin{equation*}
			\| \opS_t \| = \exp \left(-\frac{2^{p}}{p} t^{1-p}  e^{(\frac{t}{2})^{p}} \left(1 + o(1)\right)\right), \quad t \to +\infty.
		\end{equation*}
	\end{enumerate}
\end{example}

\section{Further remarks}
\label{sec:further.remarks}

\subsection{The level curves of the resolvent}
\label{ssec:level.curves}
To illustrate the role that generalised Airy operators play in the study of more complex non-self-adjoint operators, we show the level curves for the norm of the resolvent of some particular operators analysed in detail elsewhere (\cite{ArSi-resolvent-2022}, \cite{arnal2022resolvent-dwe}).

\subsubsection{Schr\"odinger operators with complex potentials}
\label{sssec:schrodinger.level.curves}
Let us assume that $i V$ satisfies \cite[Asm.~4.1]{ArSi-resolvent-2022}; in particular, $V$ is non-negative, sufficiently regular, even and regularly varying with index $p > 0$, \ie
\begin{equation*}
	\exists p > 0, \quad \forall x > 0, \quad \lim_{t \to +\infty} \frac{V(t x)}{V(t)} = |x|^{p}.
\end{equation*}

Let
\begin{equation*}
	\opH = \Dt + iV,
	\quad \Dom(H) = W^{2,2}(\R) \cap \Dom(V),
\end{equation*}
be the corresponding Schr\"odinger operator. Assume $a > 0$ to be large and define the positive real numbers $t_a$ via the equation
\begin{equation*}
	t_a V(t_a) = 2 \sqrt{a}.
\end{equation*}
In \cite[Prop.~5.2]{ArSi-resolvent-2022}, it has been shown that, for curves adjacent to the real axis inside the numerical range of $\opH$
\begin{equation*}
	\la_a := a + i b(a),
\end{equation*}
with $b: \Rplus \to \Rplus$ satisfying suitable conditions (see \cite[Sub-sec.~5.1.2]{ArSi-resolvent-2022}), we have
\begin{equation}
	\label{eq:schrodinger.res.norm.gral}
	\| (\opH - \la_a)^{-1} \| = \| (\opA_p - \mu_a)^{-1} \| (V(t_a))^{-1} (1 + o(1)), \quad a \to +\infty,
\end{equation}
with
\begin{equation*}
	\opA_p = \Nt + |x|^p, \quad 
	\Dom(\opA_p) = W^{1,2}(\R) \cap \Dom(|x|^p),
\end{equation*}
and $\mu_a := b(a) (V(t_a))^{-1}$. Assuming that $\mu_a \to +\infty$ as $a \to +\infty$ (\ie~$\la_a$ lies inside the numerical range but outside the critical region \cite[Eq.~5.3]{ArSi-resolvent-2022}) and appealing to Example~\ref{ex:W.resolvent}~\ref{itm:W.poly.resolvent}, we have
\begin{equation}
	\label{eq:schrodinger.ga.res.norm}
	\| (\opA_p - \mu_a)^{-1} \| = \sqrt{\frac{\pi}{p}} \mu_a^{\frac{1 - p}{2 p}} \exp\left(\frac{2 p}{p + 1} \mu_a^{\frac{p + 1}{p}}\right) (1 + o(1)), \quad a \to +\infty.
\end{equation}
Combining \eqref{eq:schrodinger.res.norm.gral}, \eqref{eq:schrodinger.ga.res.norm} and substituting $\| (\opH - \la_a)^{-1} \| = \eps^{-1}$, with $\eps > 0$, we obtain (refer to \cite[Sub-sec.~5.1.2]{ArSi-resolvent-2022} for details)
\begin{equation*}
	b(a) = \left(\frac{p + 1}{2p}\right)^{\frac{p}{p + 1}} V(t_a) \left(\log(V(t_a) \eps^{-1})\right)^{\frac{p}{p + 1}} (1 + o(1)), \quad a \to +\infty.
\end{equation*}
We conclude by noting that, when $V(x) = x^2$ (\ie~$\opH$ is the Davies operator), then $t_a = 2^{\frac13} a^{\frac16}$ and the above equation becomes
\begin{equation*}
	b(a) = \left(\frac32\right)^{\frac23} a^{\frac13} \left(\log(a^{\frac13} \eps^{-1})\right)^{\frac23} (1 + o(1)), \quad a \to +\infty,
\end{equation*}
(compare these curves with \cite[Eq.~7.5]{ArSi-resolvent-2022} for $n = 1$).

\subsubsection{The wave equation with unbounded damping}
\label{sssec:dwe.level.curves}
Let $\opT(\la)$ be the quadratic operator function in $\Lt(\R)$ described in \cite[Sec.~2.4]{arnal2022resolvent-dwe}, \ie
\begin{equation*}
	\opT(\la) = \Dt + q(x) + 2 \la a(x) + \la^2, \quad \Dom(\opT(\la)) = \Dom(\Dt + q) \cap \Dom(a),
\end{equation*}
with $\la \in \C \setminus (-\infty, 0]$. Assume that $a, q$ satisfy \cite[Asm.~3.1]{arnal2022resolvent-dwe} and let $\la := -c + i b$ with $c \in K \subset \overline \Rplus$ and $b \in \R \setminus \{0\}$ as in the statement of \cite[Thm.~3.5]{arnal2022resolvent-dwe}. Then we have (see \cite[Thm.~4.3]{arnal2022resolvent-dwe})
\begin{equation}
	\label{eq:Tla.res.norm.gral}
	\| \opT(\la)^{-1} \| = \| (\opA - c)^{-1} \| (2 |b|)^{-1} (1 + o(1)), \quad |b| \to +\infty,
\end{equation}
with
\begin{equation*}
	\opA = \Nt + a(x), \quad \Dom(\opA) = W^{1,2}(\R) \cap \Dom(a).
\end{equation*}
In \cite[Prop.~4.12]{arnal2022resolvent-dwe}, the above result was extended (with identical analytical expression for $\| \opT(\la)^{-1} \|$) to general curves adjacent to the imaginary axis
\begin{equation*}
	\la_b := - c(b) + i b,
\end{equation*}
with $c: \Rplus \to \Rplus$ satisfying suitable conditions (see \cite[Sub-sec.~4.2]{arnal2022resolvent-dwe}). As another application of \eqref{eq:A.resnorm}, we shall consider two examples of damping functions that satisfy Assumption~\ref{asm:W.airy}.

\begin{enumerate}[\upshape (i), wide]
	\item $a(x) = \log \langle x \rangle^p$, $p > 0$. From \eqref{eq:A.resnorm.ex.log}, we have
	\begin{equation*}
		\log\log(\| (\opA - c(b))^{-1} \|) = \frac{c(b)}{p} (1 + o(1)), \quad b \to +\infty.
	\end{equation*}
	Using \eqref{eq:Tla.res.norm.gral} with $\la = \la_b$ and substituting $\| (\opT(\la_b))^{-1} \| = \eps^{-1}$, with $\eps > 0$, we obtain the level curves
	\begin{equation}
		c(b) = p \log\log(2 b \eps^{-1}) (1 + o(1)), \quad b \to +\infty.
	\end{equation}
	It was shown in \cite[Sub-sec.~4.2]{arnal2022resolvent-dwe} that the admissible curves are those satisfying $0 < p < 1/2$.
	\item $a(x) = x^{2n}$, $n \in \N$. Applying \eqref{eq:A.resnorm.ex.poly} with $p = 2n$, we have
	\begin{equation}
		\label{eq:dwe.ga.res.norm}
		\| (\opA - c)^{-1} \| = \sqrt{\frac{\pi}{2n}} c^{\frac{1 - 2n}{4n}} \exp\left(\frac{4n}{2n + 1} c^{\frac{2n + 1}{2n}}\right) (1 + o(1)), \quad b \to +\infty.
	\end{equation}
	Combining \eqref{eq:Tla.res.norm.gral}, \eqref{eq:dwe.ga.res.norm} and substituting $\| (\opT(\la_b))^{-1} \| = \eps^{-1}$, with $\eps > 0$, it was \textit{conjectured} in \cite[Sub-sec.~4.2]{arnal2022resolvent-dwe} that the formula
	\begin{equation*}
		c(b) = \left(\frac{2n + 1}{4n}\right)^{\frac{2n}{2n + 1}} \left(\log(2 b \eps^{-1})\right)^{\frac{2n}{2n + 1}} (1 + o(1)), \quad b \to +\infty,
	\end{equation*}
	asymptotically describes the level curves in this case.
\end{enumerate}

\subsection{Optimality of resolvent bounds in \cite{Bandtlow-2004-267,SARIHAN2021QUANTITATIVE}}
\label{ssec:bandtlow.optimal}
Let $\opA$ satisfy the conditions in the statement of Theorem~\ref{thm:airy.norm}. Then $\sigma(\opA) = \emptyset$ and $\opA$ has compact resolvent (see Proposition~\ref{prop:airy.basic.properties}). It therefore follows that $\sigma(\opA^{-1}) = \{0\}$, \ie~$\opA^{-1}$ is quasi-nilpotent. For any $\la \in \C$, we can thus write
\begin{equation}
	\label{eq:bandtlow.res.decomp}
	\opA - \la = (\opI - \la \opA^{-1}) \opA \implies (\opA - \la)^{-1} = \opA^{-1} (\opI - \la \opA^{-1})^{-1}.
\end{equation}
Taking $W(x) = |x|^p$, $p > 0$, and $\opA_p= \Nt + |x|^p$ as in Example~\ref{ex:W.basic}~\ref{itm:W.poly.basic}, our next aim is to show that $\opA_p^{-1} \in \cS_{2r_p + \eps}$, with $r_p := (1 + p)/2p$ and $\eps > 0$ arbitrarily small.

The form
\begin{equation}
	h_p[f]:= \|f'\|^2 + \||x|^p f\|^2 + \|f\|^2 , \quad \Dom(h_p):=W^{1,2}(\R) \cap \Dom(|x|^p) 
\end{equation}	
defines the non-negative self-adjoint operator with compact resolvent
\begin{equation*}
	\opH_{2p} := \Dt + |x|^{2p} +1
\end{equation*}
and with $\sigma(\opH_{2p}) \subset [1, +\infty)$. By \eqref{eq:genAiry.graph.norm} and the second representation theorem \cite[Thm.~VI.2.23]{Kato-1966}, we have for any $u \in \Dom(\opA_p) = \Dom(\opH_{2p}^{\frac12}) = \Dom(h_p)$
\begin{equation*}
	\| \opA_p u \| + \| u \| \approx \| \Ntp u \| + \| |x|^p u \| + \| u \| \approx \| \opH_{2p}^{\frac12} u \|.
\end{equation*}
Hence $\opH_{2p}^{\frac12} \opA_p^{-1}$ is well-defined and moreover $\opH_{2p}^{\frac12} \opA_p^{-1} \in \sL(\Lt(\R))$. It is also known (see \eg~\cite{Titchmarsh-1954-5} or \cite[Prop.~6.1]{Mityagin-2019-139}) that the eigenvalues $\{\mu_k\}$ of the operator $\opH_{2p}$ satisfy
\begin{equation}
	\label{eq:H.ev}
	\mu_k = C_p k^{\frac1{r_p}} (1 + o(1)), \quad k \to +\infty,
\end{equation}
with $C_p > 0$ and $r_p$ as defined above. Observe that $\opH_{2p}^{-\frac12}$ is compact and furthermore, for any arbitrarily small $\eps > 0$ (with $s_k(\opH_{2p}^{-\frac12})$ denoting the singular values of $\opH_{2p}^{-\frac12}$ - refer to our notation in Section~\ref{sec:prelim}), we have
\begin{equation*}
	\sum_{k=1}^{\infty} (s_k(\opH_{2p}^{-\frac12}))^{2 r_p + \eps} \ls \sum_{k=1}^{\infty} k^{-\frac{2 r_p + \eps}{2 r_p}} < \infty.
\end{equation*}
This shows that $\opH_{2p}^{-\frac12} \in \cS_{2r_p + \eps}$ and, since $\opA_p^{-1} = \opH_{2p}^{-\frac12} \opH_{2p}^{\frac12} \opA_p^{-1}$ and $\| \opH_{2p}^{\frac12} \opA_p^{-1} \| < \infty$, we conclude that $\opA_p^{-1} \in \cS_{2r_p + \eps}$ (see \cite[Lem.~XI.9.9]{DS2}), as claimed. 

Recalling that $\opA_p^{-1}$ is also quasi-nilpotent, we can apply the Carleman-type estimate \cite[Thm.~3.4.6]{Ringrose-1971}, \cite[Cor.~XI.9.25]{DS2} or \cite[Thm.~2.1]{Bandtlow-2004-267} to establish (with $a_p > 0$)
\begin{equation}
	\label{eq:bandtlow.poly.res.est.1}
	\| (\opI - \la \opA_p^{-1})^{-1} \| \ls \exp(a_p \| \opA_p^{-1} \|_{2 r_p + \eps}^{2 r_p + \eps} |\la|^{2 r_p + \eps}).
\end{equation}
From \eqref{eq:bandtlow.res.decomp} and \eqref{eq:bandtlow.poly.res.est.1}, we deduce that
\begin{equation}
	\label{eq:bandtlow.poly.res.est.2}
	\log \| (\opA_p - \la)^{-1} \| \ls |\la|^{2 r_p + \eps}.
\end{equation}
Furthermore $2 r_p = (1 + p)/p$ and hence, comparing \eqref{eq:bandtlow.poly.res.est.2} with our estimate \eqref{eq:A.resnorm.ex.poly}, we conclude that this general upper bound yields an almost optimal rate for the growth of the resolvent of $\opA_p$.

\subsubsection{Improvement using the Schatten-Lorentz ideals in \cite{SARIHAN2021QUANTITATIVE}}
\label{sssec:S-L.ideals}
We note that, using the more general techniques presented in \cite{SARIHAN2021QUANTITATIVE}, it is possible to adapt the above reasoning to derive an upper bound for the resolvent of $\opA_p$ which is similar to \eqref{eq:bandtlow.poly.res.est.2} but with $\eps = 0$. 

The key is to observe that, due to the eigenvalue estimate \eqref{eq:H.ev}, the operator $\opH_{2p}^{-\frac12}$ belongs in fact to the Schatten-Lorentz ideal $\cS_{2r_p, \infty}$ and consequently so does $\opA_p^{-1}$ (see \cite[Prop.~3.9]{SARIHAN2021QUANTITATIVE}). One then applies \cite[Thm.~4.12]{SARIHAN2021QUANTITATIVE} to $\la \opA_p^{-1}$ noting firstly that, since $\opA_p^{-1}$ is quasi-nilpotent, the upper bound \cite[Eq.~10]{SARIHAN2021QUANTITATIVE} can be reduced to $\| (\opI - \la \opA_p^{-1})^{-1} \| \le F_{\overset{.}{\overline\omega}}(|\opA_p^{-1}|_{\overset{.}{\overline\omega}} |\la|)$, and secondly that, by \cite[Prop.~5.4]{SARIHAN2021QUANTITATIVE}, we also have $\log F_{\overset{.}{\overline\omega}}(|\la|) \sim |\la|^{2 r_p}$ as $|\la| \to +\infty$. We thus conclude
\begin{equation*}
	\log	\| (\opA_p - \la)^{-1} \| \ls |\la|^{2 r_p}, \quad |\la| \to +\infty.
\end{equation*}

\subsubsection{Schr\"odinger operators with imaginary monomial potentials}
Finally, although not using the results of this paper, we remark that the upper bounds in \cite[Thm.~4.1]{Bandtlow-2004-267} and \cite[Thm.~4.12]{SARIHAN2021QUANTITATIVE} for operators with non-empty spectrum are almost exhausted by the resolvents of Schr\"odinger operators
\begin{equation*}
	\opH_n=\Dt + \ii x^n, \qquad \Dom(\opH_n) = W^{2,2}(\R) \cap \Dom(x^n), \quad n \in \N \setminus \{1\}.
\end{equation*}
It is known that the spectrum of $\opH_n$ is discrete, $0 \notin \sigma(\opH_n)$ and
\begin{equation*}
	\sigma(\opH_{3}) \subset \Rplus, \quad \sigma(\opH_{2m}) \subset e^{i \frac\pi{2(m+1)}} \Rplus, \quad m \in \N;
\end{equation*}
see \cite{Dorey-2001-34,Shin-2002-229} for the case $n=3$ and employ complex scaling for $n = 2m$.

Using the graph norm separation of $\opH_n$ and \eqref{eq:H.ev}, one obtains that $\opH_n^{-1} \in \cS_{q_n,\infty}$ with $q_n = (n+2)/(2n)$, $n \in \N$. Thus, as in \eqref{eq:bandtlow.res.decomp}, writing
\begin{equation*}
	(\opH_n-\la)^{-1} = \opH_n^{-1} \left(I-\la \opH_n^{-1}\right)^{-1}, \quad \la \in \rho(\opH_n),
\end{equation*}
the application of \cite[Thm.~4.1]{Bandtlow-2004-267} yields
\begin{equation*}
	\begin{aligned}
		\|(\opH_n-\la)^{-1}\| &\ls
		\frac{\exp\left( \dfrac{c_n |\la|^{q_n+\eps}}{\dist(1,\sigma(\la \opH_n^{-1}))^{q_n+\eps}} \right)}{\dist(1,\sigma(\la \opH_n^{-1}))},  \quad \la \in \rho(\opH_n),
	\end{aligned}
\end{equation*}
where $c_n >0$ is independent of $\la$. Similarly as above in Sub-section~\ref{sssec:S-L.ideals}, the results \cite[Thm.~4.12, Prop.~5.4]{SARIHAN2021QUANTITATIVE} provide an improvement with $\eps =0$ which yields
\begin{equation}
\log \|(\opH_n-\la)^{-1}\| \ls |\la|^{q_n}, \quad |\la| \to \infty,
\end{equation}
when $\la$ is sufficiently far from the spectrum of $\opH_n$ so that $\dist(1,\sigma(\la \opH_n^{-1})) \gs 1$ (\eg~$\la$ diverges along a ray from where the eigenvalues lie).

On the other hand, pseudomode constructions, see \cite{Dencker-2004-57} (or \cite{Krejcirik-2015-56} with a detailed proof in one dimension), provide
\begin{equation}
\log \|(\opH_n - e^{\ii \theta} s)^{-1}\| \gs s^{q_n} , \quad s \to + \infty,
\end{equation}
where $\theta \in (0,\pi/2)$ if $n$ is even, $\theta \in (-\pi/2,\pi/2)$ if $n$ is odd.

\bibliography{references}

\begin{thebibliography}{10}

\bibitem{Almog-2008-40}
{\sc Almog, Y.}
\newblock {The Stability of the Normal State of Superconductors in the Presence
  of Electric Currents}.
\newblock {\em SIAM J. Math. Anal. 40\/} (2008), 824--850.

\bibitem{Almog-2010-300}
{\sc Almog, Y., Helffer, B., and Pan, X.-B.}
\newblock {Superconductivity Near the Normal State Under the Action of Electric
  Currents and Induced Magnetic Fields in ${\mathbb R^2}$}.
\newblock {\em Comm. Math. Phys. 300}, 1 (Aug 2010), 147--184.

\bibitem{arnal2022resolvent-dwe}
{\sc Arnal, A.}
\newblock {Resolvent estimates for the one-dimensional damped wave equation
  with unbounded damping}.
\newblock {\em arXiv preprint arXiv:2206.08820\/} (2022).

\bibitem{ArSi-resolvent-2022}
{\sc Arnal, A., and Siegl, P.}
\newblock {Resolvent estimates for one-dimensional Schr{\"o}dinger operators
  with complex potentials}.
\newblock {\em arXiv preprint arXiv:2203.15938\/} (2022).

\bibitem{Bandtlow-2004-267}
{\sc Bandtlow, O.~F.}
\newblock {Estimates for norms of resolvents and an application to the
  perturbation of spectra}.
\newblock {\em Math. Nachr. 267\/} (2004), 3--11.

\bibitem{bandtlow2008resolvent}
{\sc Bandtlow, O.~F.}
\newblock Resolvent estimates for operators belonging to exponential classes.
\newblock {\em Integral Equations and Operator Theory 61}, 1 (2008), 21--43.

\bibitem{bandtlow2015explicit}
{\sc Bandtlow, O.~F., and G{\"u}ven, A.}
\newblock Explicit upper bounds for the spectral distance of two trace class
  operators.
\newblock {\em Linear Algebra and its Applications 466\/} (2015), 329--342.

\bibitem{Beauchard-2015-21}
{\sc Beauchard, K., Helffer, B., Henry, R., and Robbiano, L.}
\newblock Degenerate parabolic operators of {K}olmogorov type with a geometric
  control condition.
\newblock {\em ESAIM Control Optim. Calc. Var. 21}, 2 (2015), 487--512.

\bibitem{BordeauxMontrieux-2013}
{\sc Bordeaux~Montrieux, W.}
\newblock {Estimation de r{\'e}solvante et construction de quasimode pr{\`e}s
  du bord du pseudospectre}.
\newblock arXiv:1301.3102, 2013.

\bibitem{Davies-2007}
{\sc Davies, E.~B.}
\newblock {\em {Linear operators and their spectra}}.
\newblock Cambridge University Press, 2007.

\bibitem{Dencker-2004-57}
{\sc Dencker, N., Sj{\"o}strand, J., and Zworski, M.}
\newblock {Pseudospectra of semiclassical (pseudo-) differential operators}.
\newblock {\em Commun. Pure Appl. Math. 57\/} (2004), 384--415.

\bibitem{dijkstra1977continued}
{\sc Dijkstra, D.}
\newblock {A continued fraction expansion for a generalization of Dawson’s
  integral}.
\newblock {\em Mathematics of computation 31}, 138 (1977), 503--510.

\bibitem{DLMF}
\emph{NIST Digital Library of Mathematical Functions}.
\newblock \url{http://dlmf.nist.gov/}, Release 1.0.17 of 2017-12-22.
\newblock F.~W.~J. Olver, A.~B. {Olde Daalhuis}, D.~W. Lozier, B.~I. Schneider,
  R.~F. Boisvert, C.~W. Clark, B.~R. Miller and B.~V. Saunders, eds.

\bibitem{Dorey-2001-34}
{\sc Dorey, P., Dunning, C., and Tateo, R.}
\newblock {Spectral equivalences, Bethe ansatz equations, and reality
  properties in $\mathcal{PT}$-symmetric quantum mechanics}.
\newblock {\em J. Phys. A: Math. Gen. 34\/} (2001), 5679.

\bibitem{DS2}
{\sc Dunford, N., and Schwartz, J.~T.}
\newblock {\em {Linear Operators, Part 2}}.
\newblock John Wiley \& Sons, Inc., New York, 1988.

\bibitem{EE}
{\sc Edmunds, D.~E., and Evans, W.~D.}
\newblock {\em {Spectral Theory and Differential Operators}}.
\newblock Oxford University Press, New York, 1987.

\bibitem{Engel-Nagel-book}
{\sc Engel, K.-J., and Nagel, R.}
\newblock {\em One-parameter semigroups for linear evolution equations}.
\newblock Springer-Verlag, New York, 2000.

\bibitem{Grebenkov-2018-269}
{\sc Grebenkov, D.~S.}
\newblock {Diffusion {MRI}/{NMR} at high gradients: Challenges and
  perspectives}.
\newblock {\em Micro.Meso. Mater. 269\/} (Oct 2018), 79--82.

\bibitem{Grebenkov-2018-50}
{\sc Grebenkov, D.~S., and Helffer, B.}
\newblock {On spectral properties of the Bloch-Torrey operator in two
  dimensions}.
\newblock {\em SIAM J. Math. Anal. 50\/} (2018), 622--676.

\bibitem{grebenkov2017complex}
{\sc Grebenkov, D.~S., Helffer, B., and Henry, R.}
\newblock {The complex Airy operator on the line with a semipermeable barrier}.
\newblock {\em SIAM J. Math. Anal. 49}, 3 (2017), 1844--1894.

\bibitem{Halmos-1978-96}
{\sc Halmos, P.~R., and Sunder, V.~S.}
\newblock {\em {Bounded integral operators on $L^2$ spaces}}.
\newblock Springer-Verlag, Berlin, 1978.

\bibitem{Helffer-2013-book}
{\sc Helffer, B.}
\newblock {\em {Spectral theory and its applications}}.
\newblock Cambridge University Press, 2013.

\bibitem{Helffer-NYUS-2016}
{\sc Helffer, B.}
\newblock {Spectral theory for the complex Airy operator: the case of a
  semipermeable barrier and applications to the Bloch-Torrey equation}.
\newblock Talk at NYU Shanghai, 5 October 2016, 2016.

\bibitem{Henry-2012-350}
{\sc Henry, R.}
\newblock {Spectral instability of some non-selfadjoint anharmonic
  oscillators}.
\newblock {\em C. R. Math. Acad. Sci. Paris 350\/} (2012), 1043--1046.

\bibitem{Herbst-1979-64}
{\sc Herbst, I.~W.}
\newblock {Dilation analyticity in constant electric field. {I}. {T}he two body
  problem}.
\newblock {\em Comm. Math. Phys. 64\/} (1979), 279--298.

\bibitem{Kato-1966}
{\sc Kato, T.}
\newblock {\em {Perturbation theory for linear operators}}.
\newblock Springer-Verlag, Berlin, 1995.

\bibitem{Krejcirik-2015-56}
{\sc Krej{\v{c}}i{\v{r}}{\'i}k, D., Siegl, P., Tater, M., and Viola, J.}
\newblock {Pseudospectra in non-Hermitian quantum mechanics}.
\newblock {\em J. Math. Phys. 56\/} (2015), 103513.

\bibitem{Krejcirik-2019-51}
{\sc Krej\v{c}i\v{r}\'{\i}k, D.}
\newblock {Complex magnetic fields: an improved {H}ardy-{L}aptev-{W}eidl
  inequality and quasi-self-adjointness}.
\newblock {\em SIAM J. Math. Anal. 51\/} (2019), 790--807.

\bibitem{martinet-these}
{\sc Martinet, J.}
\newblock {Sur les propri\'et\'es spectrales d'op\'erateurs non-autoadjoints
  provenant de la m\'ecanique des fluides}.
\newblock Th\`ese de doctorat, Facult\'e des Sciences d'Orsay, Universit\'e
  Paris-Sud 11, 2009.

\bibitem{Mityagin-2019-139}
{\sc Mityagin, B., and Siegl, P.}
\newblock {Local form-subordination condition and Riesz basisness of root
  systems}.
\newblock {\em J. Anal. Math. 139\/} (2019), 83--119.

\bibitem{MiSiVi-2020}
{\sc Mityagin, B., Siegl, P., and Viola, J.}
\newblock {Concentration of eigenfunctions of Schr\"odinger operators}.
\newblock {\em J. Fourier Anal. Appl. 28\/} (2022).

\bibitem{Ringrose-1971}
{\sc Ringrose, J.~R.}
\newblock {\em {Compact non-self-adjoint operators}}.
\newblock Van Nost. Reinhold, 1971.

\bibitem{Rubinstein-2010-195}
{\sc Rubinstein, J., Sternberg, P., and Zumbrun, K.}
\newblock {The Resistive State in a Superconducting Wire: Bifurcation from the
  Normal State}.
\newblock {\em Arch. Rat. Mech. Anal. 195\/} (2010), 117--158.

\bibitem{SARIHAN2021QUANTITATIVE}
{\sc Sar{\i}han, A.~G., and Bandtlow, O.~F.}
\newblock {Quantitative spectral perturbation theory for compact operators on a
  Hilbert space}.
\newblock {\em Linear Algebra and its Applications 610\/} (2021), 169--202.

\bibitem{Savchuk-2017-51}
{\sc Savchuk, A.~M., and Shkalikov, A.~A.}
\newblock {Spectral properties of the complex {A}iry operator on the
  half-line}.
\newblock {\em Funktsional. Anal. i Prilozhen. 51}, 1 (2017), 82--98.

\bibitem{Semoradova-toappear}
{\sc Semor{\'a}dov{\'a}, I., and Siegl, P.}
\newblock {Diverging eigenvalues in domain truncations of Schr\"odinger
  operators with complex potentials}.
\newblock {\em SIAM J. Math. Anal.\/} (to appear).

\bibitem{Shin-2002-229}
{\sc Shin, K.~C.}
\newblock {On the Reality of the Eigenvalues for a Class of
  $\mathcal{PT}$-Symmetric Oscillators}.
\newblock {\em Comm. Math. Phys. 229\/} (2002), 543--564.

\bibitem{Shkalikov-2004-124}
{\sc Shkalikov, A.~A.}
\newblock {Spectral Portraits of the Orr-Sommerfeld Operator with Large
  Reynolds Numbers}.
\newblock {\em J. Math. Sci. 124}, 6 (Dec 2004), 5417--5441.

\bibitem{Simon-2005}
{\sc Simon, B.}
\newblock {\em {Trace ideals and their applications}}, 2nd~ed., vol.~120.
\newblock AMS, Providence, RI, 2005.

\bibitem{Titchmarsh1939theory}
{\sc Titchmarsh, E.~C.}
\newblock {\em {The theory of functions}}.
\newblock Oxford University Press, 1939.

\bibitem{Titchmarsh-1954-5}
{\sc Titchmarsh, E.~C.}
\newblock {On the asymptotic distribution of eigenvalues}.
\newblock {\em Q. J. Math. 5\/} (1954), 228--240.

\bibitem{Tumanov-2017-96}
{\sc Tumanov, S.~N., and Shkalikov, A.~A.}
\newblock {Eigenvalue dynamics of a {PT}-symmetric Sturm{\textendash}Liouville
  operator and criteria for similarity to a self-adjoint or a normal operator}.
\newblock {\em Dokl. Math. 96}, 3 (Nov 2017), 607--611.

\end{thebibliography}
\bibliographystyle{acm}	

\end{document}